\newcommand{\F}{\mathbb{F}}
\newcommand{\Irr}{\mathrm{Irr}}
\newcommand{\vep}{\varepsilon}
\newcommand{\epi}{\varepsilon_{\iota}}
\newcommand{\Om}{\Omega}
\newcommand{\POm}{\mathrm{P}\Omega}
\newcommand{\PSU}{\mathrm{PSU}}
\newcommand{\gO}{\mathrm{O}}
\newcommand{\mmod}{\mathrm{mod} \;}
\newcommand{\SO}{\mathrm{SO}}
\newcommand{\K}{\mathrm{K}}
\newcommand{\gT}{\mathrm{T}}
\newcommand{\Sp}{\mathrm{Sp}}
\def\adots{\mathinner{\mkern2mu\raise0pt\hbox{.}  
\mkern2mu\raise4pt\hbox{.}\mkern1mu
\raise7pt\vbox{\kern7pt\hbox{.}}\mkern1mu}}
\newtheorem{theorem}{Theorem}[section]
\newtheorem{proposition}{Proposition}[section]
\newtheorem{lemma}{Lemma}[section]
\numberwithin{equation}{section}
\begin{document}

\title[Reality properties of finite simple orthogonal groups]{Some reality properties of finite simple orthogonal groups}

\author{Jiwon Kim}
\address{107, Haedoji-ro, 6-4101\\Yeonsu-gu, Incheon, 21977\\Republic of Korea}
\email{kjiwon831@gmail.com}

\author{Stephen Trefethen}
\address{Department of Mathematics, William \& Mary\\ Williamsburg, VA, 23187\\USA}
\email{sjtrefethen@wm.edu, vinroot@math.wm.edu}

\author{C. Ryan Vinroot}

\begin{abstract}  
We prove several reality properties for finite simple orthogonal groups.  For any prime power $q$ and $m \geq 1$, we show that all real conjugacy classes are strongly real in the simple groups $\POm^{\pm}(4m+2,q), m \geq 1$, except in the case $\POm^{-}(4m+2,q)$ with $q \equiv 3(\mmod 4)$, and we construct weakly real classes in this exceptional case for any $m$.  We also show that no irreducible complex character of $\POm^{\pm}(n,q)$ can have Frobenius--Schur indicator $-1$, except possibly in the case $\POm^{-}(4m+2,q)$ with $q \equiv 3(\mmod 4)$.
\\
\\
2020 {\it AMS Mathematics Subject Classification}:  15B10, 20G40, 20C33
\end{abstract}

\maketitle

\thispagestyle{empty}

\section{Introduction}

Let $\gO(V)$ be the full orthogonal group corresponding to a non-degenerate symmetric form on a finite-dimensional vector space $V$ over a field of characteristic which is not 2.  It is a result of M. Wonenburger \cite{Wo66} that every element of $\gO(V)$ is the product of two involutions from $\gO(V)$.  This is equivalent to the fact that every element of $\gO(V)$ may be conjugated to its inverse by an involution in $\gO(V)$.  That is, every conjugacy class of the orthogonal group $\gO(V)$ is \emph{strongly real}.  For certain subgroups of orthogonal groups, this property is further explored by F. Kn\"{u}ppel and G. Thomsen \cite{KnTh98}, who classify when the commutator subgroup $\Om(V)$ of $\gO(V)$, and the kernel of the spinor norm on $\gO(V)$, are strongly real.  

If $\gO(V)$ is now the orthogonal group corresponding to a non-defective quadratic form on $V$ over a field which is characteristic 2, it was shown by E. Ellers and W. Nolte \cite{ElNo82} and R. Gow \cite{Gow81} that $\gO(V)$ is also strongly real.  In the case that $V$ has dimension divisible by $4$ and $V$ is defined over a finite field of characteristic $2$, it is proven by J. R\"{a}m\"{o} \cite{Ra11} that the commutator subgroup $\Om(V)$ of $\gO(V)$ is strongly real.  This is not the case if the dimension of $V$ is 2 modulo $4$, by the work of P. H. Tiep and A. E. Zalesski \cite{TiZa05}. 

For an element (or conjugacy class) of a group to be strongly real, it is necessary that it is \emph{real}, or conjugate to its inverse within the group.  In the representation theory of finite groups, it is of interest to understand which conjugacy classes of a group are real, because the number of real conjugacy classes of a finite group is equal to the number of real-valued irreducible complex characters of that group.  The classification of finite simple groups with the property that all elements are real is completed by P. H. Tiep and A. E. Zalesski \cite{TiZa05}.  The classification of finite simple groups such that all elements are strongly real is completed by J. R\"{a}m\"{o} \cite{Ra11}, and E. P. Vdovin and A. A. Gal$^\prime$t \cite{VdGa10}, with the somewhat surprising result that all elements of a finite simple group are real if and only if they are all strongly real.

In this paper, we focus on the finite simple orthogonal groups, and the question of when all real elements are strongly real.  By the results mentioned above, the answer to this question is understood in the case when all elements are real, and so we study the remaining cases.  We give a complete answer to the question in the case that the underlying vector space has dimension which is 2 modulo 4.

The connection between the strongly real classes of a finite group and the irreducible complex characters of that group is still mysterious in the general case, and one motivation for our study is to further understand this connection in the case of finite simple orthogonal groups.  In particular, in some classes of finite groups there is a connection between strongly real classes and the irreducible complex characters which are afforded by a real representation.  For example, it is proven in \cite{Vi202} that for a finite simple group, all conjugacy classes are strongly real if and only if all irreducible complex characters can be afforded by a real representation.  In this paper we gather evidence that for finite simple orthogonal groups, all real classes are strongly real if and only if all real-valued irreducible complex characters can be afforded by real representations.

This paper is organized as follows.  In Section \ref{basic}, we give some standard information on orthogonal groups over finite fields and their important subgroups.  In Section \ref{OddChar}, after giving some previous results, we prove our main results in the case of odd characteristic, which are reality results for $\Om(V)$ and $\POm(V)$ in the remaining cases when $\dim(V) \equiv 2($mod $4)$.  In particular, in the cases when these groups are known to have classes which are not real, we show that all real elements are strongly real, in Proposition \ref{easycases} and Theorem \ref{realomega}, except in the case of $\Om^-(4m+2,q)$ and $\POm^-(4m+2,q)$ with $q \equiv 3($mod $4)$.  In that exceptional case, we construct elements which are real but not strongly real in Theorem \ref{Weakly} (with technical base case constructions given in the Appendix).  In Section \ref{CharTwo}, we prove that all real elements are strongly real in $\Om^{\pm}(4m+2, q)$ when $q$ is a power of $2$, and we classify all real elements, in Theorem \ref{OmStrong}.  In Section \ref{Indicators}, we prove in Theorem \ref{RealVal} that all real-valued irreducible characters of $\Om^{\pm}(n,q)$ and $\POm^{\pm}(n,q)$ can be afforded by real representations, except possibly in the case $\Om^-(4m+2,q)$ and $\POm^-(4m+2,q)$ with $q \equiv 3($mod $4)$, and we give some concluding remarks.
\\
\\
{\bf Acknowledgments. }  Much of this work was done when the first-named author was a visiting assistant professor at William \& Mary.  The third-named author was supported in part by a grant from the Simons Foundation, Award \#713090.

\section{Finite orthogonal groups} \label{basic}

In this section we state some well-known facts about finite orthogonal groups and some of their subgroups.  All of these results may be found in \cite{Gr02} or \cite[Chapter 16]{CaEn04}.

Let $q$ be the power of a prime, and $\F_q$ a finite field with $q$ elements.  Fix a finite-dimensional $\F_q$-vector space $V$, say $\dim_{\F_q}(V) = n$, and we assume $V$ carries a quadratic form $Q$.  In the case that $q$ is odd, we assume that the bilinear form $B$ associated with $Q$ is non-degenerate, and in the case that $q$ is even we assume that $n$ is even and $Q$ is non-defective.  That is, when $q$ is odd we have, for all $u, v \in V$,
$$Q(u+v) = Q(u) + 2B(u,v) + Q(v),$$
for a non-degenerate symmetric bilinear form $B$, while if $q$ is even we have
$$Q(u+v) = Q(u) + B(u,v) + Q(v),$$
for a non-degenerate symplectic form $B$.  We may then consider the group $\gO(V)$ of isometries of the quadratic form $Q$, which is the {\it orthogonal group}.  When $q$ is odd, this is the same as the group of isometries of the associated symmetric bilinear form $B$.

For all $n$ and $q$ considered above, there are precisely two equivalence classes of quadratic forms $Q$.  For $v \in V$, we take $v$ to have coordinate representation $v = (x_1, \ldots, x_n)$ with respect to some basis.  When $n=2m$ is even, one form may be taken to be
$$Q(v) = x_1 x_2 + x_3 x_4 + \cdots + x_{2m-1} x_{2m},$$
which we say is a {\it split} form, or {\it $+$-type}.  When $q$ is odd and $n = 2m+1$ is odd, the split, or $+$-type, form is given by 
$$Q(v) = x_1 x_2 + x_3 x_4 + \cdots + x_{2m-1} x_{2m} + x_{2m+1}^2.$$
The second equivalence class of quadratic forms (which we do not formulate explicitly here) in the cases under consideration will be called {\it non-split} forms, or {\it $-$-type}.  We will denote by $\gO^{+}(n, q)$ or $\gO^{-}(n,q)$ the orthogonal groups corresponding to a quadratic form of split or non-split type, respectively, and we will write $\gO^{\pm}(n,q)$ for the generic case.  In the case that $n$ and $q$ are odd, we in fact have $\gO^+(n,q) \cong \gO^-(n,q)$, but these groups are not isomorphic in the general case.

One group of primary interest in this paper is the derived (or commutator) subgroup of the orthogonal group.  Letting $X^{\prime}$ denote the derived subgroup of a group $X$, we may define
$$ \Om^{\pm}(n,q) = \gO^{\pm}(n,q)^{\prime}.$$
When $q$ is even, $\Om^{\pm}(n,q)$ is an index two subgroup of $\gO^{\pm}(n,q)$ and is a simple group.  When $q$ is odd, $\Om^{\pm}(n,q)$ is an index $4$ subgroup of $\gO^{\pm}(n,q)$, and may or may not contain the central involution of $\gO^{\pm}(n,q)$.  Whether the central involution is contained in $\Om^{\pm}(n,q)$ may be determined by $n$, $q$, and the type $\pm$ of the underlying form, as described in \cite[pg. 230]{CaEn04}.  If $Z$ is the center of $\Om^{\pm}(n,q)$, then $\POm^{\pm}(n,q) = \Om^{\pm}(n,q)/Z$ is a finite simple group (with a finite number of exceptions when $n$ is small).

We now assume that $q$ is the power of an odd prime until Section \ref{CharTwo}.  In this case, we may define the special orthogonal groups $\SO^{\pm}(n,q)$ to be the elements of $\gO^{\pm}(n,q)$ with determinant $1$.  Then $\Om^{\pm}(n,q)$ is an index $2$ subgroup of $\SO^{\pm}(n,q)$, and $\SO^{\pm}(n,q)$ is an index $2$ subgroup of $\gO^{\pm}(n,q)$.  One may also define the {\it spinor norm} on $\gO^{\pm}(n,q)$, which is a multiplicative homomorphism
$$ \theta: \gO^{\pm}(n,q) \longrightarrow \F_q^{\times}/(\F_q^{\times})^2,$$
and the explicit definition can be found in \cite[Chapter 9]{Gr02}.  Then $\ker(\theta)$ is an index $2$ subgroup of $\gO^{\pm}(n,q)$, and we write $\ker(\theta) = \K^{\pm}(n,q)$.  Also $\Om^{\pm}(n,q)$ is an index 2 subgroup of $\K^{\pm}(n,q)$, and in fact we have
$$ \Om^{\pm}(n,q) = \SO^{\pm}(n,q) \cap \K^{\pm}(n,q).$$
There is a third index two subgroup of $\gO^{\pm}(n,q)$ containing $\Om^{\pm}(n,q)$, consisting of $\Om^{\pm}(n,q)$ together with the coset in $\gO^{\pm}(n,q)$ disjoint from both $\K^{\pm}(n,q)$ and $\SO^{\pm}(n,q)$.  We define this subgroup to be $\gT^{\pm}(n,q)$, that is,
\begin{align*}
\gT^{\pm}(n,q) & = \Om^{\pm}(n,q) \cup \{ g \in \gO^{\pm}(n,q) \, \mid \, g \not\in \K^{\pm}(n,q) \cup \SO^{\pm}(n,q) \} \\
                        & = \mathrm{ker} (\theta \times \det). 
\end{align*}

We denote the \emph{discriminant} of the symmetric form $B$ carried by $V$, defined in \cite[Chapter 2]{Gr02}, by $dV \in \F_q^{\times}/(\F_q^{\times})^2$.  

The discriminant, spinor norm, and determinant each behave as a direct product map over an orthogonal direct sum of the underlying space $V = \F_q^n$.  That is, suppose $V = \oplus_i V_i$ is an orthogonal direct sum with respect to $B$, so that we may consider the non-degenerate symmetric form $B$ restricted to the subspace $V_i$, denoted $B_{V_i}$.  Then we have $dV = \sum_i dV_i$.  If $g \in \gO(V)$ and each $V_i$ is $g$-invariant, then we may consider $g$ with restricted action on each subspace $V_i$, and denote this by $g_i = g_{V_i} \in \gO(V_i)$.  Then we have that the spinor norm and determinant maps satisfy
$$ \theta(g) = \sum_i \theta(g_i) \quad \text{ and } \quad \det(g) = \prod_i \det(g_i).$$
When restricting the symmetric form $B$ to a subspace $V_i$, the $\pm$ type of the corresponding quadratic form may be determined by the values of $q$, $\dim(V_i)$, and the discriminant $dV_i$, as described in \cite[pgs. 221-222]{CaEn04}.

\section{Odd characteristic} \label{OddChar}

We take $q$ to be the power of an odd prime, $V = \F_q^n$ endowed with a quadratic form $Q$, and $B$ the associated non-degenerate symmetric bilinear form on $V$.  Let $\gO^{\pm}(n, q)$ be the corresponding orthogonal group.

\subsection{Preliminary Results}

In this section we give previous results which will be repeatedly used in what follows.  If $G$ is a finite group, then we say $g \in G$ is \emph{real in $G$} if there is an element $h \in G$ such that $h g h^{-1} = g^{-1}$.  If all elements of $G$ satisfy this property, then we say $G$ is a \emph{real group}.  We say $g \in G$ is \emph{strongly real in $G$} if there is an element $s \in G$ such that $s^2 = 1$ and $s g s^{-1} = g^{-1}$, or equivalently, if there are elements $s_1, s_2 \in G$ such that $s_1^2 = s_2^2 = 1$ and $g = s_1 s_2$.  If all elements of $G$ are strongly real in $G$, then we say $G$ is a \emph{strongly real group}.  

We have the following result of Wonenburger \cite{Wo66} mentioned in the introduction (see also \cite[Proposition 4.5]{KnTh98}).

\begin{theorem} \label{Osr}  If $q$ is the power of an odd prime and $n \geq 2$, then $\gO^{\pm}(n,q)$ is strongly real.
\end{theorem}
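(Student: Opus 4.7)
The plan is to prove the equivalent formulation: every $g \in \gO^{\pm}(n,q)$ is a product of two involutions in $\gO^{\pm}(n,q)$, which in turn amounts to producing an involution $s \in \gO^{\pm}(n,q)$ with $s g s^{-1} = g^{-1}$. The strategy, following Wonenburger, is to use a bilinear-form-adapted rational canonical decomposition of $V$, construct the conjugating involution on each piece, and then assemble.

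First I would decompose $V$ as an orthogonal direct sum $V = V_1 \perp V_2 \perp \cdots \perp V_k$ of $g$-invariant non-degenerate subspaces of one of two types. For $p(x) \in \F_q[x]$ of degree $d$ with $p(0) \neq 0$, write $p^{*}(x) = p(0)^{-1} x^d p(1/x)$ for the reciprocal polynomial. Every irreducible factor of the characteristic polynomial of $g$ is either self-reciprocal ($p = p^*$) or comes paired with a distinct reciprocal partner; since $g$ is an isometry, paired factors $p, p^*$ occur to the same multiplicity. We then decompose so that each $V_i$ is of type (A) primary for a single self-reciprocal irreducible $p$, or of type (B) equal to a hyperbolic sum $U_i \oplus U_i'$ with $U_i, U_i'$ totally isotropic $g$-stable subspaces on which $g$ has minimal polynomial a power of $p$ and $p^{*}$ respectively. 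Since strong reality is preserved under orthogonal direct sum of invariant pieces, it suffices to produce an involution $s_i \in \gO(V_i)$ conjugating $g|_{V_i}$ to its inverse on each piece.

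For type (B), the construction is essentially formal: using the perfect pairing between $U_i$ and $U_i'$ induced by $B$, fix a basis of $U_i$ and its dual basis of $U_i'$. The involution $s_i$ which swaps a vector $u \in U_i$ with its dual $u^{*} \in U_i'$ is an isometry of the hyperbolic form, and a direct check using that $g|_{U_i}$ and $g|_{U_i'}$ are transposes of each other under the pairing gives $s_i g s_i^{-1} = g^{-1}$. For type (A), the argument is more delicate. Here one reduces to the $g$-cyclic case, identifies $V_i$ with $\F_q[x]/(p(x)^e)$ as an $\F_q[g]$-module, and uses the self-reciprocity of $p$ to equip this module with a non-degenerate symmetric form for which the map induced by $x \mapsto x^{-1}$ (modulo a chosen uniformizer) is an isometric involution conjugating multiplication by $x$ to multiplication by $x^{-1}$. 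One then checks that the form constructed this way is equivalent to the restricted form $B|_{V_i}$ and transports the involution accordingly.

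The main obstacle is the type (A) case: verifying that the natural ``inversion'' involution on the cyclic module lives in $\gO(V_i)$ (rather than merely in $\GL(V_i)$) and agrees with the actual ambient form. This comes down to comparing the invariants classifying non-degenerate symmetric forms with a prescribed isometry of given self-reciprocal minimal polynomial, which requires splitting the characteristic-2-of-minimal-polynomial edge case (where $p(x) = x \pm 1$) from the generic case. Once each $s_i$ is produced, the orthogonal sum $s = s_1 \perp \cdots \perp s_k$ is an involution in $\gO^{\pm}(n,q)$ with $s g s^{-1} = g^{-1}$, completing the proof.
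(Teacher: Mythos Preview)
The paper does not prove this theorem; it is cited as Wonenburger's result \cite{Wo66} (see also \cite[Proposition~4.5]{KnTh98}). Your outline follows her strategy, but the execution in the type (A) cyclic case is more complicated than necessary and, as written, leaves a gap. Your plan there is to build an abstract cyclic module equipped with a convenient symmetric form admitting an inverting involution, and then transport back to $V_i$ after checking that the model form agrees with $B|_{V_i}$. Over $\F_q$ a given cyclic $\F_q[g]$-module can carry inequivalent $g$-invariant non-degenerate symmetric forms (this is precisely Wall's extra invariant \cite{Wall}), so the transport is not automatic and you would in principle need a separate model for each form class. Your ``reduce to the $g$-cyclic case'' also glosses over the bicyclic orthogonally indecomposable pieces for $p = t \pm 1$ (the paper's type $1^{\pm}$), which are not $g$-cyclic.

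Wonenburger's actual argument bypasses all of this by working intrinsically. If $V_i$ is $g$-cyclic with cyclic vector $v$ (this already subsumes your type (B), since there $U_i \oplus W_i$ is $g$-cyclic with self-reciprocal minimal polynomial $p^e (p^*)^e$), set $\sigma(g^k v) = g^{-k} v$. This is well-defined because the minimal polynomial is self-reciprocal, visibly squares to the identity and inverts $g$, and is an isometry of the \emph{given} form $B$ since $B(g^i v, g^j v) = B(v, g^{j-i} v)$ depends only on $j-i$ and is hence unchanged under $(i,j) \mapsto (-i,-j)$. No form comparison is needed. For the bicyclic pieces one uses the analogous swap $g^k u \leftrightarrow g^{-k} w$ for cyclic vectors $u \in U_i$, $w \in W_i$, and the same one-line isometry check goes through.
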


The following essentially comes from \cite{Wo66}, but also follows from \cite{ScVi16}.

\begin{theorem} \label{SOsr} If $q$ is the power of an odd prime and $n \geq 3$, then $\SO^{\pm}(n, q)$ is strongly real if and only if it is real, and this group is real if and only if $n \not\equiv 2($mod $4)$.
\end{theorem}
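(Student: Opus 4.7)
The plan is to treat the three residue classes of $n$ modulo $4$ separately and to reduce every statement to Wonenburger's theorem (Theorem \ref{Osr}) via a determinant-bookkeeping argument.

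For $n$ odd, I would leverage the central scalar $-I_V \in \gO^{\pm}(n,q)$, which has $\det(-I_V) = (-1)^n = -1$. Given $g \in \SO^{\pm}(n,q)$, Theorem \ref{Osr} supplies an involution $s \in \gO^{\pm}(n,q)$ with $sgs^{-1} = g^{-1}$. Then $-s$ is also an involution inverting $g$, and $\det(-s) = -\det(s)$, so exactly one of $s, -s$ lies in $\SO^{\pm}(n,q)$. This establishes both reality and strong reality of $\SO^{\pm}(n,q)$ for odd $n \geq 3$.

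For even $n$ this trick fails, since $\det(-s) = \det(s)$, so instead I would decompose $V = V_1 \perp \cdots \perp V_r$ as an orthogonal direct sum of non-degenerate $g$-invariant subspaces coming from the primary factorization of the characteristic polynomial of $g$ (pairing each irreducible factor $f(x)$ with $x^{\dgr f} f(x^{-1})$ when the two differ, and placing each self-reciprocal factor on its own block). Theorem \ref{Osr} applied on each block produces an inverting involution $s_i \in \gO(V_i)$, and $s = \bigoplus_i s_i$ inverts $g$ globally in $\gO^{\pm}(n,q)$ with $\det(s) = \prod_i \det(s_i)$. For each block type one classifies which signs of $\det(s_i) \in \{\pm 1\}$ are attainable among inverting involutions: paired blocks and unipotent or odd-dimensional self-reciprocal blocks allow both signs, while certain even-dimensional self-reciprocal blocks force a single sign depending on $\dim V_i$, the type of $B|_{V_i}$, and $q$ modulo $4$. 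A case analysis should show that when $n \equiv 0 ($mod $4)$ one can always engineer $\det(s) = 1$, yielding strong reality. When $n \equiv 2 ($mod $4)$, I would construct an explicit non-real element by taking a regular semisimple $g$ whose minimal polynomial is irreducible and self-reciprocal of degree $n$; the centralizer $C_{\gO^{\pm}(n,q)}(g)$ is then a cyclic group of order $q^{n/2}+1$ arising as the norm-one subgroup of $\F_{q^n}^\times$ over $\F_{q^{n/2}}$, and one verifies it lies inside $\SO^{\pm}(n,q)$ while every inverting element of $\gO^{\pm}(n,q)$ has determinant $-1$. Since conjugation by any $h \in \gO^{\pm}(n,q)$ with $hgh^{-1} = g^{-1}$ satisfies $h \in s \cdot C_{\gO^{\pm}(n,q)}(g)$, the sign $\det(h) = \det(s) = -1$ is forced, so $g$ is not real in $\SO^{\pm}(n,q)$.

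The ``strongly real if and only if real'' equivalence then follows: for $n$ odd and for $n \equiv 0 ($mod $4)$ the forward implication is the strong reality just proved, while for $n \equiv 2 ($mod $4)$ the group is not real, so the equivalence holds vacuously. The principal obstacle I expect is the blockwise bookkeeping in the $n \equiv 0 ($mod $4)$ case, namely showing that the forced-sign blocks combine compatibly to give $\prod_i \det(s_i) = 1$ in every configuration of the primary decomposition, together with explicitly verifying the centralizer-versus-inverting-determinant parity in the non-real construction for $n \equiv 2 ($mod $4)$. The conjugacy-class and centralizer machinery of \cite{ScVi16} is tailor-made for precisely this analysis, which is why the result follows from that reference.
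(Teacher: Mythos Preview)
The paper does not supply a proof here; it cites \cite{Wo66} and \cite{ScVi16} and moves on. Your sketch is in the right spirit, and the odd-$n$ argument via $-I$ is exactly the standard one.

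There is, however, a genuine gap in your non-real construction for $n\equiv 2(\mmod 4)$. An element of $\gO^{\pm}(n,q)$ whose characteristic polynomial is irreducible (hence self-reciprocal) of degree $n$ exists only in $\gO^{-}(n,q)$, never in $\gO^{+}(n,q)$: the cyclic torus of order $q^{n/2}+1$ that such an element generates is the anisotropic Coxeter torus, which lives only in the non-split form (concretely, a Zsigmondy prime for $q^{n}-1$ divides $q^{n/2}+1$ but not $|\gO^{+}(n,q)|$). So your argument exhibits a non-real element of $\SO^{-}(4m+2,q)$ but says nothing about $\SO^{+}(4m+2,q)$. The simplest repair is to take a block sum $g=g_1\oplus g_2$ on $V=V_1\perp V_2$ with each $g_i$ having irreducible self-reciprocal characteristic polynomial of degree $n_i\equiv 2(\mmod 4)$ and $n_1+n_2=n$; each $V_i$ is then of $-$ type, so $V$ is of $+$ type, and the same centralizer/determinant parity argument applies.

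There is also a subtler issue of scope. As literally stated, the theorem is a group-level biconditional, and that is what your sketch addresses. But the paper later invokes Theorem~\ref{SOsr} (in the proof of Proposition~\ref{easycases}) in the stronger element-level form ``every real element of $\SO^{\pm}(4m+2,q)$ is strongly real there''. Your outline does not establish this when $n\equiv 2(\mmod 4)$: showing the group is not real says nothing about its individual real elements. That element-level statement is the actual content of \cite[Theorem~7.2]{ScVi16}, and it requires carrying out your blockwise determinant bookkeeping uniformly in $n$, together with the characterization that $g\in\SO^{\pm}(n,q)$ is real in $\SO^{\pm}(n,q)$ if and only if $g$ has an elementary divisor $(t\pm 1)^e$ with $e$ odd.
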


The following is the result \cite[Theorem 8.6]{KnTh98} of Kn\"{u}ppel and Thomsen, specialized to the case of finite fields.

\begin{theorem} \label{Ksr} If $q$ is the power of an odd prime, and $n \geq 3$ (and $q >3$ when $n < 6$), then $\K^{\pm}(n, q)$ is strongly real if and only if:
\begin{enumerate}
\item[(i)] $q \equiv 1(\mmod 4)$, or
\item[(ii)] $q \equiv 3($mod $4)$ and is one of $\K^+(4m+2, q)$, $\K^+(4m+3, q)$, $\K^-(4m, q)$, or $\K^-(4m+1, q)$, or 
\item[(iii)] $n=8$ or $n=9$.
\end{enumerate}
\end{theorem}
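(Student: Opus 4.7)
The plan is to adapt the strong-reality proof for $\gO^\pm(n,q)$ (Theorem \ref{Osr}) by tracking the spinor norm of the reversor through an indecomposable block decomposition. For $g \in \K^\pm(n,q)$, strong reality inside $\K^\pm(n,q)$ is equivalent to $g$ admitting a reversor involution $s$ (meaning $s^2=1$ and $sgs^{-1}=g^{-1}$) with $\theta(s)=1$. Since the spinor norm of an involution equals the discriminant of its $(-1)$-eigenspace, this becomes the question of whether $g$ admits a reversor involution whose $(-1)$-eigenspace has square discriminant. Equivalently, writing $g = s_1 s_2$ with $s_1, s_2 \in \gO^\pm(n,q)$ involutions (via Theorem \ref{Osr}), the relation $\theta(s_1)\theta(s_2)=\theta(g)=1$ forces $\theta(s_1)=\theta(s_2)$, and strong reality in $\K$ is the statement that this common value can be chosen to be trivial.

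First I would decompose $V = \bigoplus_i V_i$ orthogonally into $g$-invariant non-degenerate subspaces on which $g$ acts indecomposably, and apply Theorem \ref{Osr} on each $V_i$ to write $g|_{V_i}=s_i t_i$ with $s_i, t_i$ involutions in $\gO(V_i)$. A global reversor involution then splits as $s = \bigoplus_i s_i$ with $\theta(s) = \sum_i \theta(s_i)$ in $\F_q^\times/(\F_q^\times)^2$. Letting $\Sigma_i \subseteq \F_q^\times/(\F_q^\times)^2$ denote the set of spinor norms realized by reversor involutions of $g|_{V_i}$, the strong-reality condition for $g$ in $\K$ becomes $0 \in \sum_i \Sigma_i$, and I must verify this for every $g \in \K^\pm(n,q)$.

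The key step is the computation of $\Sigma_i$ block by block. Indecomposable orthogonal $g$-blocks fall into three classes: paired blocks arising from a non-self-reciprocal irreducible factor of the minimal polynomial of $g$, self-reciprocal cyclic blocks with irreducible minimal polynomial of degree $\geq 2$, and unipotent blocks with minimal polynomial a power of $x \pm 1$. For the first two classes the centralizer $C_{\gO(V_i)}(g|_{V_i})$ is rich enough (containing a general linear or unitary-type subgroup) that conjugating a fixed reversor by a centralizer element realizes both square classes, so $\Sigma_i = \F_q^\times/(\F_q^\times)^2$ and these blocks impose no constraint. The unipotent blocks are the rigid case: the classification of orthogonal indecomposable unipotent modules, together with the tight description of their reversors, forces $\Sigma_i$ into a coset determined by the Jordan shape, the $\pm$-type of the form on $V_i$, and the square class of $-1$ in $\F_q^\times$. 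When $q \equiv 1(\mmod 4)$, so $-1$ is a square, the rigidity largely dissolves and $0 \in \sum_i \Sigma_i$ is automatic, giving (i).

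When $q \equiv 3(\mmod 4)$, the remaining task is a parity bookkeeping argument relating the unipotent-block contributions to the global discriminant $dV$. The analysis shows that the obstruction $0 \notin \sum_i \Sigma_i$ can be realized by some $g \in \K^\pm(n,q)$ precisely when $\K^\pm(n,q)$ falls outside the four families listed in (ii), while in each of those families the obstruction vanishes for every $g$. The small-rank cases $n = 8, 9$ of (iii) must be handled by direct examination, exploiting accidental features available only in these dimensions; the hypothesis $q>3$ when $n<6$ rules out degenerate small-group behavior. The principal obstacle throughout is the unipotent-block enumeration: controlling $\Sigma_i$ on each orthogonal unipotent Jordan block is the technical heart of the argument, and the resulting data dictates both the four families in (ii) and the small-rank exceptions in (iii).
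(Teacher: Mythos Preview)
The paper does not prove Theorem~\ref{Ksr}; it is quoted in the ``Preliminary Results'' subsection as a direct specialization of \cite[Theorem~8.6]{KnTh98} to finite fields, with no argument given. There is therefore nothing in the paper to compare your proposal against.

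Your outline is broadly in the spirit of the Kn\"uppel--Thomsen method (block decomposition, tracking of spinor norms on reversor involutions), but as written it is a plan rather than a proof. The sentences ``the analysis shows that the obstruction \ldots\ can be realized \ldots\ precisely when'' and ``must be handled by direct examination'' are exactly the places where all of the work lies, and you have not done that work here. In particular, your claim that for non-unipotent blocks $\Sigma_i = \F_q^{\times}/(\F_q^{\times})^2$ is correct (and matches \cite[Lemma~5.8]{KnTh98}), but the unipotent case requires the detailed case analysis of \cite[Lemmas~5.1--5.6]{KnTh98}, which you have only gestured at. If you intend to supply a self-contained proof, that analysis --- specifically the computation of $\Sigma_i$ for blocks of type $1^\pm$ and $2^\pm$ in terms of $dV_i$ and $q\bmod 4$ --- must be carried out explicitly; otherwise, a citation to \cite{KnTh98} is what the paper actually does.
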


Kn\"{u}ppel and Thomsen also prove the following in \cite[Theorem 8.5]{KnTh98} (see also \cite{Ga10}).

\begin{theorem} \label{Omsr} If $q$ is the power of an odd prime, and $n \geq 3$ (and $q >3$ when $n < 6$), then $\Om^{\pm}(n, q)$ is strongly real if and only if:
\begin{enumerate}
\item[(i)] $q \equiv 1($mod $4)$ and $n \not\equiv 2($mod $4)$, or
\item[(ii)] $q \equiv 3($mod $4)$ and it is $\Om^-(4m, q)$, or 
\item[(iii)] $n=8$ or $n=9$.
\end{enumerate}
\end{theorem}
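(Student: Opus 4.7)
The plan is to combine Wonenburger's theorem (Theorem \ref{Osr}) with a primary-decomposition analysis of an arbitrary element $g\in \Om^\pm(n,q)$. The identity $\Om^\pm(n,q) = \SO^\pm(n,q)\cap \K^\pm(n,q)$ means that strong reality of $g$ in $\Om^\pm(n,q)$ amounts to exhibiting an involution $s\in\gO^\pm(n,q)$ inverting $g$ with both $\det(s)=1$ and $\theta(s)=0$ in $\F_q^\times/(\F_q^\times)^2$.

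For the sufficiency direction I would decompose $V=\bigoplus_i V_i$ as an orthogonal direct sum of $g$-invariant subspaces, one for each primary factor of the minimal polynomial of $g$. Non-self-reciprocal factor pairs $\{p(\lambda),\lambda^{\deg p}p(\lambda^{-1})\}$ contribute hyperbolic blocks on which all four pairs $(\det(s_i),\theta(s_i))\in\{\pm 1\}\times\F_q^\times/(\F_q^\times)^2$ are realizable, and self-reciprocal factors $p(\lambda)$ with $p(\pm 1)\ne 0$ are similarly flexible. The entire rigidity therefore sits in the blocks for eigenvalues $\pm 1$. On each such block one computes the set of achievable pairs $(\det(s_i),\theta(s_i))$ for reversing involutions $s_i\in\gO(V_i)$ as a function of the Jordan type, of $q \pmod 4$, and of the discriminant $dV_i$. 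A counting argument then shows that the global condition $(\prod_i\det(s_i),\sum_i\theta(s_i))=(1,0)$ can be arranged, subject to the a priori constraints $\prod_i\det(g_i)=1$ and $\sum_i\theta(g_i)=0$ forced by $g\in\Om^\pm(n,q)$, in exactly the listed cases.

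For the necessity direction I would, for each case excluded by (i)--(iii), exhibit $g\in\Om^\pm(n,q)$ every one of whose reversing involutions in $\gO^\pm(n,q)$ lies outside $\Om^\pm(n,q)$. When $q\equiv 1\pmod 4$ and $n\equiv 2\pmod 4$, Theorem \ref{SOsr} already produces a non-real element of $\SO^\pm(n,q)$, and since $\Om^\pm(n,q)\subseteq \SO^\pm(n,q)$ any such element lying in $\Om^\pm(n,q)$ is automatically not real in $\Om^\pm(n,q)$; one checks that at least one such element can be arranged to lie in the derived subgroup. For the remaining $q\equiv 3\pmod 4$ cases outside $\Om^-(4m,q)$, one uses the spinor-norm obstruction from Theorem \ref{Ksr}: a Jordan block of the correct dimension for eigenvalue $\pm 1$, placed on a subspace of suitable discriminant, forces every reversing involution to have nontrivial spinor norm, hence to lie outside $\K^\pm(n,q)$ and therefore outside $\Om^\pm(n,q)$.

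The principal obstacle is the bookkeeping on the $\pm 1$-blocks: the reversing involutions on a Jordan block with eigenvalue $\pm 1$ form a restricted subset of $\gO(V_i)$, and the achievable $(\det,\theta)$ pairs depend in a delicate way on $q\pmod 4$, on the partition describing the block, and on $dV_i$. Tracking these dependencies across all blocks and matching them against the parity constraints forced by $g\in\Om^\pm(n,q)$ is what ultimately produces the split into the three cases. The low-rank exceptions $n=8,9$ and the restriction $q>3$ for $n<6$ reflect either special isogenies---most notably triality for $D_4$---or the failure of $\F_q^\times/(\F_q^\times)^2$ to have enough room to realize the required block modifications, and these are best handled by direct verification.
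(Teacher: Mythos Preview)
The paper does not prove this theorem; it is stated in Section~\ref{OddChar} as a preliminary result of Kn\"uppel and Thomsen \cite[Theorem~8.5]{KnTh98} (see also \cite{Ga10}), with no argument supplied. There is therefore no in-paper proof to compare your proposal against.

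That said, your sketch is broadly in the spirit both of \cite{KnTh98} and of the methods the paper uses for its own results (Lemma~\ref{CasesLemma}, Proposition~\ref{T1mod4}, Theorem~\ref{realomega}): decompose $V$ into orthogonally indecomposable $g$-invariant pieces, analyze the achievable $(\det,\theta)$-pairs for reversing involutions on each piece, and assemble. Two points in your outline are off, however. First, the exceptional status of $n=8,9$ has nothing to do with triality: these cases arise because in eight or nine dimensions there is just enough combinatorial room on the $\pm 1$-blocks to adjust a reversing involution into $\Om^\pm(n,q)$ even when $q\equiv 3\pmod 4$; the argument in \cite{KnTh98} is about block sizes and discriminants, not about outer automorphisms of $D_4$, and $n=9$ is type $B_4$, which has no triality at all. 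Second, your explanation of the restriction $q>3$ for $n<6$ is incorrect: the group $\F_q^\times/(\F_q^\times)^2$ has order $2$ for every odd $q$, so $q=3$ is no smaller there than any other case. The paper's own comment immediately after Theorem~\ref{Omsr} indicates the actual reason: for $n<6$ the groups $\Om^\pm(n,q)$ are handled via their exceptional isomorphisms to other small classical groups \cite[Proposition~2.9.1]{KL90}.
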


Note that for $n < 6$ we have $\Om^{\pm}(n,q)$ is isomorphic to other well-understood finite groups, by \cite[Proposition 2.9.1]{KL90}.

Recall that if $F$ is a field with fixed algebraic closure $\overline{F}$, given a monic non-constant polynomial with nonzero constant $f(t) \in F[t]$, the \emph{reciprocal polynomial} of $f(t)$, denoted $f^*(t) \in F[t]$, is the polynomial with the property that for any zero $\alpha \in \overline{F}^{\times}$ of $f(t)$, $\alpha^{-1}$ is a zero of $f^*(t)$ with the same multiplicity.  Then $f(t)$ is \emph{self-reciprocal} if $f(t) = f^*(t)$.

We next describe an important orthogonal decomposition of the underlying space $V$.  Given any $g \in \gO^{\pm}(n, q)$, then as in \cite[Section 3]{KnTh98}, $V$ may be decomposed as an orthogonal (with respect to $B$) direct sum, $V = \oplus V_i$, such that each $V_i$ is an orthogonally indecomposable $g$-invariant subspace of $V$, where each $V_i$ is of one of the following forms:
\begin{enumerate}
\item[(1)] $V_i$ is a (non-orthogonal) direct sum, $V_i = U_i \oplus W_i$ of degenerate totally isotropic $g$-invariant cyclic subspaces (with $\dim_{\F_q}(U_i) = \dim_{\F_q}(W_i)$), and $g$ has a single elementary divisor on both $U_i$ and $W_i$ which is either $(t-1)^{2e}$ (\emph{type $1^-$}) or $(t+1)^{2e}$ (\emph{type $1^+$});
\item[(2)] $V_i$ is $g$-cyclic, such that $g$ has a single elementary divisor $f(t)^e$ on $V_i$, where $f(t)$ is irreducible and self-reciprocal, and either $f(t) \neq t \pm 1$ (\emph{type $2^*$}), or $e$ is odd and $f(t) = t- 1$ (\emph{type $2^-$}) or $f(t) = t+1$ (\emph{type $2^+$});
\item[(3)] $V_i$ is $g$-cyclic, and $V_i$ is a (non-orthogonal) direct sum $V_i = U_i \oplus W_i$ of totally isotropic $g$-invariant subspaces (with $\dim_{\F_q}(U_i) = \dim_{\F_q}(W_i)$), such that $g$ has a single elementary divisor $f(t)^e$ on $U_i$, and a single elementary divisor $f^*(t)^e$ on $W_i$, such that $f(t)$ is irreducible and $f(t) \neq f^*(t)$ (\emph{type $3$}).
\end{enumerate}
In case (1), we say that $V_i$ is \emph{bicyclic} with respect to $g$.

\subsection{Reality results}

We begin with the following observation.
\begin{proposition} \label{easycases}  Let $G$ be one of the finite simple groups $G = \Omega^-(4m+2,q)$ with $q \equiv 1(\mmod 4)$ or $G = \Omega^+(4m+2, q)$ with $q \equiv 3(\mmod 4)$, where $m \geq 1$.  Then all real classes of $G$ are strongly real.
\end{proposition}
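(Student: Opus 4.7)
The plan is to combine Theorem~\ref{Ksr} with an adjustment argument that exploits the hypothesis that $g$ is real in $\Omega$.

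First, I observe that in both cases of the Proposition, $\K^{\pm}(4m+2,q)$ is strongly real by Theorem~\ref{Ksr}: condition~(i) covers $q\equiv 1(\mmod 4)$, and condition~(ii) explicitly lists $\K^+(4m+2,q)$ for $q\equiv 3(\mmod 4)$.  A short discriminant computation also shows that $dV$ is a non-square in each case (for $\gO^-(4m+2,q)$ with $q\equiv 1(\mmod 4)$, the $-$-type condition forces $(-1)^{2m+1}dV$ to be a non-square, and since $-1$ is a square this means $dV$ is a non-square; the other case is analogous).  Consequently $-I\in\SO\setminus\Omega$ in both cases.

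Next, let $g\in G$ be real in $G$, with reverser $h\in\Omega$ satisfying $hgh^{-1}=g^{-1}$.  Since $g\in\K$, I write $g=s_1s_2$ with $s_1,s_2$ involutions in $\K$.  Since $\det g=1$, we have $\det s_1=\det s_2$, so either both $s_i\in\Omega$ (whence $g$ is already strongly real in $\Omega$) or both $s_i\in\K\setminus\Omega$.

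In the latter case, the plan is to modify the factorization using $h$.  The element $c:=s_1h$ lies in $C_\gO(g)$ (since both $s_1$ and $h$ invert $g$), with $\det c=-1$ and $\theta(c)=0$, so $c\in C_\K(g)\setminus\Omega$; in particular this coset is nonempty.  The remaining task is to produce $c'\in C_\K(g)\setminus\Omega$ that is inverted by $s_1$, i.e.\ $s_1c's_1=c'^{-1}$.  Given such $c'$, the elements $s_1':=s_1c'$ and $s_2':=c'^{-1}s_2$ are involutions (because $s_1$ inverts $c'$ and consequently $s_2$ does as well, since $c'\in C_\gO(g)$ and $s_1s_2=g$), they lie in $\Omega$ (having $\det=+1$ and $\theta=0$), and satisfy $s_1's_2'=s_1s_2=g$, so $g$ is strongly real in $\Omega$.

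To construct $c'$, I would work through the orthogonal decomposition $V=\bigoplus V_i$ of $V$ into $g$-invariant orthogonally-indecomposable subspaces of types $1^{\pm}$, $2^{\pm}$, $2^*$, and $3$ from \cite[Section~3]{KnTh98}, together with the parallel decomposition of $s_1$.  On each summand, the problem reduces to finding an element of $C_{\gO(V_i)}(g|_{V_i})$ inverted by $s_1|_{V_i}$ and coordinating the $(\det,\theta)$-contributions so that $c'$ lies in the $\K\setminus\Omega$ coset globally; the element $-I\in\SO\setminus\Omega$ provides additional flexibility to align these contributions across summands.

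The main obstacle is this last construction.  For the even-dimensional types $1^{\pm}$, $2^*$, $3$, the centralizers of $g|_{V_i}$ in $\gO(V_i)$ are rich enough to furnish inverted elements of any prescribed $(\det,\theta)$-value.  The odd-dimensional types $2^{\pm}$, namely a single Jordan block of odd size with eigenvalue $\pm 1$, have much more rigid centralizers of involutive reversers, and it is here that one must use the full strength of the hypothesis that $g$ is real in $\Omega$ (rather than merely in $\K$), together with the non-square discriminant specific to the two cases, to coordinate the signs and extract an acceptable $c'$.
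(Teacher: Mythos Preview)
Your approach has a genuine gap: the construction of $c'\in C_{\K}(g)\setminus\Omega$ inverted by $s_1$ is only sketched, and you yourself flag the type~$2^{\pm}$ summands as an unresolved obstacle. Carrying this out would require the block-by-block $(\det,\theta)$ bookkeeping that the paper reserves for the harder Theorem~\ref{realomega}; as written, the proof is incomplete.

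The paper's argument avoids all of this by choosing a different intermediate group: $\SO^{\pm}(4m+2,q)$ rather than $\K^{\pm}(4m+2,q)$. By Theorem~\ref{SOsr}, every real class of $\SO^{\pm}(4m+2,q)$ is strongly real there, so if $g\in G$ is real in $G$ then it is strongly real in $\SO$; write $g=s_1s_2$ with $s_i^2=1$ in $\SO$, and assume both lie in $\SO\setminus\Omega$. You correctly observed that $-I\in\SO\setminus\Omega$ in both cases. Then $\det(-s_i)=1$ and $\theta(-s_i)=\theta(-I)\theta(s_i)=(\F_q^{\times})^2$, so $-s_1,-s_2\in\Omega$ and $g=(-s_1)(-s_2)$. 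That is the entire proof.

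The reason your $\K$-route does not admit this one-line fix is the behaviour of $\det$: if $s_i\in\K\setminus\Omega$ then $\det s_i=-1$, and since $\dim V=4m+2$ is even, $\det(-s_i)=-1$ as well, so $-s_i\notin\SO$ and hence $-s_i\notin\Omega$. Swapping the intermediate group from $\K$ to $\SO$ makes the global $-I$ adjustment immediate and eliminates the need for any centralizer construction.
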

\begin{proof} We have $G$ is a subgroup of $\SO^\pm(4m+2, q)$, and all real classes of this group are strongly real by Theorem \ref{SOsr}.  Let $g \in G$ be real in $G$, which is then real in $\SO^{\pm}(4m+2, q)$.  So we may write $g = s_1s_2$, where $s_1^2 = s_2^2 = 1$, and we may assume $s_1, s_2 \in \SO^{\pm}(4m+2,q) \setminus G$.  We have $-I \not\in G$, and so the involutions $-s_1, -s_2 \in G$ with $g = (-s_1)(-s_2)$.  Thus $g$ is strongly real in $G$.
\end{proof}

The goal for this section is to prove that the same statement holds true in the case $G = \Omega^+(4m+2,q)$ with $q \equiv 1($mod $4)$ and for the simple group $G/Z = \POm^+(4m+2,q)$ with $q \equiv 1($mod $4)$, but in contrast we also show that the groups $\Omega^-(4m+2,q)$ and $\POm^-(4m+2,q)$ with $q \equiv 3($mod $4)$ always have weakly real classes.

\begin{lemma} \label{CasesLemma} Let  $g \in \gT^{\pm}(n, q)$, with $V = \F_q^n$.  Let $V = \oplus V_i$ be the orthogonal decomposition corresponding to $g$.  Then $g$ is strongly real in $\gT^{\pm}(n,q)$ if:
\begin{enumerate}
\item there exists a subspace $V_i$ such that $\dim(V_i) \equiv 2(\mmod 4)$ which is type $2^*$ or $3$, or
\item there exists a subspace $V_i$ which is type $2^{\pm}$ and $dV_i = (\F_q^{\times})^2$.
\end{enumerate}
\end{lemma}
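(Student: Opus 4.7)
The plan is to leverage Wonenburger's theorem (Theorem~\ref{Osr}), which yields $g = s_1 s_2$ with $s_1, s_2 \in \gO^{\pm}(n,q)$ involutions, and then to modify this factorization so that both involutions lie in $\gT^{\pm}(n,q)$. A first observation is that Wonenburger's theorem can be applied on each orthogonally indecomposable summand $V_j$ separately, so one may arrange $s_\ell = \bigoplus_j s_{\ell, j}$ with $s_{\ell, j} \in \gO(V_j)$ an involution and $g|_{V_j} = s_{1, j} s_{2, j}$. Because $\det$ is multiplicative and $\theta$ is additive over orthogonal direct sums, the global $(\det(s_\ell), \theta(s_\ell))$ is determined by the local data. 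Since $\gT^{\pm}(n,q)$ is the diagonal index-two subgroup of $\gO^{\pm}(n,q)/\Om^{\pm}(n,q) \cong (\Z/2)^2$, and $g \in \gT^{\pm}(n,q)$ by hypothesis, it suffices to arrange the single $\Z/2$-parity condition $\det(s_1) \theta(s_1) \equiv 0$; then $s_2 = s_1^{-1} g$ automatically lies in $\gT^{\pm}(n,q)$ as well.

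The crux is then to show that on the special $V_i$ promised by (1) or (2), the set of involutions $s \in \gO(V_i)$ inverting $g|_{V_i}$ realizes \emph{both} values of $\det(s) \theta(s) \pmod{2}$. Given this, involutions on the remaining $V_j$ are chosen arbitrarily via Wonenburger, and the choice on $V_i$ is then used to cancel any accumulated parity; the replacement $s_{2, i}' = s_{1, i}' \cdot g|_{V_i}$ remains an involution because $s_{1, i}'$ still inverts $g|_{V_i}$.

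For case~(1) with $V_i$ of type~$3$, the space $V_i = U \oplus W$ is hyperbolic of dimension $2d$ with $d$ odd (from $\dim V_i \equiv 2 \pmod{4}$), and in a suitable basis $g|_{V_i} = \mathrm{diag}(A, A^{-T})$ with $A$ and $A^{-T}$ having distinct elementary divisors $f(t)^e$ and $f^*(t)^e$. Any involution of $\gO(V_i)$ inverting $g|_{V_i}$ must swap $U$ and $W$, and hence takes the form $s_X = \bigl(\begin{smallmatrix} 0 & X \\ X^{-1} & 0 \end{smallmatrix}\bigr)$ with $X$ symmetric satisfying $X A X^{-1} = A^T$. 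One checks that $\det(s_X) = (-1)^d = -1$, and for $P \in C_{\GL(U)}(A) = \F_q[A] \cong \F_q[t]/(f(t)^e)$ the product $s_X \cdot s_{PX} = \mathrm{diag}(P^{-1}, P^T)$ lies in the centralizer of $g|_{V_i}$ with spinor norm $\det(P) \pmod{(\F_q^\times)^2}$. Since the parity condition forces $e$ odd, $\det(P) = N_{\F_{q^{\deg f}}/\F_q}(p(\bar t))^e$ sweeps out all of $\F_q^\times/(\F_q^\times)^2$ as $p$ varies, so some $P$ produces a non-square determinant and thereby a second $s_{PX}$ with $\theta(s_{PX}) \neq \theta(s_X)$; both parities of $\det(s_X) \theta(s_X)$ are realized.

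For case~(1) type~$2^*$ and for case~(2), the cyclic subspace $V_i$ is treated analogously by identifying it with a cyclic module over $R = \F_q[t]/(f(t)^e)$ (with $f$ the relevant irreducible factor) and twisting a fixed Wonenburger involution by suitable units of $R$. The dimension hypothesis in type~$2^*$ and the square-discriminant hypothesis in case~(2) are what guarantee that the spinor norm can be shifted by a non-square while keeping the determinant contribution under control. The main obstacle is the spinor norm computation in these cyclic cases: unlike in type~$3$, the orthogonal structure on $V_i$ is not hyperbolic but rather Hermitian-like (coming from the involution $t \mapsto t^{-1}$ on $R$) or determined by explicit discriminant data, and verifying the necessary parity change requires careful tracking through the unitary-like centralizer of $g|_{V_i}$ in $\gO(V_i)$.
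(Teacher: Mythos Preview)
Your overall strategy---start from a Wonenburger factorization respecting the decomposition $V=\oplus V_j$ and then adjust on the distinguished summand $V_i$ so that the single parity $\det(s_1)\theta(s_1)$ lands in $\gT^{\pm}(n,q)$---is exactly the paper's approach. Your treatment of type~$3$ is a correct and explicit reworking of what the paper cites as \cite[Lemma~5.8]{KnTh98}: you exhibit the antidiagonal inverting involutions $s_X$ on the hyperbolic space and show that multiplying $X$ by a centralizer unit $P$ with non-square determinant flips $\theta(s_X)$ while $\det(s_X)=(-1)^d$ stays fixed. (Minor typo: the intertwining condition is $X^{-1}AX=A^T$, not $XAX^{-1}=A^T$, but your subsequent computations are consistent with the correct relation.)

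The gaps are in the other two cases. For type~$2^*$ you say ``treated analogously'' and then concede that the spinor norm computation is the obstacle; that is precisely where the work lies, since $V_i$ is no longer hyperbolic and the orthogonal centralizer of $g|_{V_i}$ is a unitary-type group over $\F_q[t]/(f(t)^e)$ with respect to $t\mapsto t^{-1}$. The paper does not do this computation either---it invokes \cite[Lemma~5.8]{KnTh98}, whose proof shows that a $\GL(V_i)$-conjugate $s_i'$ of $s_i$ (so $\det(s_i')=\det(s_i)$) can be chosen with $\theta(s_i')=\alpha\theta(s_i)$ for any prescribed $\alpha$---so you should either cite that lemma or carry out the Hermitian computation. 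For case~(2) your plan is both incomplete and needlessly hard, and your description (``shift the spinor norm by a non-square while keeping the determinant under control'') is backwards: for a type~$2^{\pm}$ block the clean move is the opposite. Since $\dim V_i$ is odd, $\det(-I_{V_i})=-1$, and since $dV_i=(\F_q^\times)^2$ one has $-I_{V_i}\in\ker\theta$; hence replacing $s_i$ by $-s_i$ flips $\det$ while fixing $\theta$, which already gives both parities of $\det\cdot\theta$ on $V_i$. This is the paper's argument, and it bypasses any centralizer analysis.
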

\begin{proof} First, since $g \in \gO^{\pm}(n, q)$, then $g$ is the product of two involutions in $\gO^{\pm}(n,q)$ by Theorem \ref{Osr}.  Let us take $g=s h$ with $s^2 = h^2 = 1$, and we assume $s, h \in \gO^{\pm}(n,q) \setminus \gT^{\pm}(n,q)$, otherwise $g$ is already strongly real in $\gT^{\pm}(n,q)$.  Write $s_i = s_{V_i}$ for each subspace $V_i$, and by the proof of \cite[Proposition 4.5]{KnTh98}, we may assume $s_i \in \gO(V_i)$.  First suppose that there is a subspace $V_i$ with $\dim(V_i) \equiv 2($mod $4)$ which is of type $2^*$ or type $3$.  We consider the spinor norm on $\gO(V_i)$, and by \cite[Lemma 5.8]{KnTh98} and its proof, for any non-square $\alpha \in \F_q^{\times} \setminus (\F_q^{\times})^2$, we may replace $s_i$ with some involution $s_i'$ such that $\theta(s_i') = \alpha \theta(s_i)$, where $s_i'$ is a $\mathrm{GL}(V_i)$-conjugate of $s_i$ so that $\det(s_i) = \det(s_i')$.  Then we define
$$ s' = s_i' \oplus \bigoplus_{j \neq i} s_j,$$
and by construction $s' \in \gT^{\pm}(n,q)$, and $s'$ is an inverting involution for $g$.

Next assume that there is a subspace $V_i$ which is type $2^{\pm}$ and has discriminant $dV_i = (\F_q^{\times})^2$.  Then we define
$$s' = (-s_i) \oplus \bigoplus_{j \neq i} s_j,$$
so that $s'$ is an involution in $\gO^{\pm}(n,q)$.  Since $dV_i = (\F_q^{\times})^2$, then $-I_{V_i} \in \mathrm{ker}(\theta_i)$ (by \cite[pgs. 221-222, 230]{CaEn04}), and so $\theta(s') = \theta(s)$.  Since $V_i$ is type $2^{\pm}$, then $\dim(V_i)$ is odd, and so $\det(s') = -\det(s)$.  So by construction we now have $s' \in \gT^{\pm}(n,q)$ and $s'$ is an inverting involution for $g$.
\end{proof}

\begin{proposition} \label{T1mod4}  Let $n \geq 3$ and $q \equiv 1(\mmod 4)$.  Then $\gT^{\pm}(n,q)$ is strongly real.
\end{proposition}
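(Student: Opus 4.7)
The plan is to take an arbitrary $g \in \gT^{\pm}(n,q)$. By Theorem \ref{Osr}, there exist involutions $s, h \in \gO^{\pm}(n,q)$ with $g = sh$, and by the construction in the proof of \cite[Proposition 4.5]{KnTh98} we may arrange that $s$ and $h$ each preserve the orthogonal decomposition $V = \oplus_i V_i$ of $V$ into orthogonally indecomposable $g$-invariant subspaces, writing $s = \oplus_i s_i$ with $s_i \in \gO(V_i)$.  If $s \in \gT^{\pm}(n,q)$, then since $g \in \gT^{\pm}(n,q)$ we also have $h = s^{-1}g \in \gT^{\pm}(n,q)$, so $g$ is strongly real in $\gT^{\pm}(n,q)$.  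Otherwise $\det(s)\theta(s)$ is the non-trivial element of $\F_q^\times/(\F_q^\times)^2$, and the task becomes to modify $s$ on one or more $V_i$ to produce an involution $s' \in \gT^{\pm}(n,q)$ still inverting $g$.

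By Lemma \ref{CasesLemma}, such a modification exists whenever the decomposition of $g$ contains a subspace of type $2^*$ or $3$ with $\dim(V_i) \equiv 2(\mmod 4)$, or a subspace of type $2^{\pm}$ with square discriminant.  This reduces the problem to the remaining case in which every $V_i$ is of type $1^{\pm}$ (so $\dim(V_i)$ is divisible by $4$), of type $2^*$ or $3$ with $\dim(V_i) \equiv 0(\mmod 4)$, or of type $2^{\pm}$ with non-square discriminant.  The essential input from $q \equiv 1(\mmod 4)$ is that $-1 \in (\F_q^\times)^2$, which yields $\theta(-I_{V_i}) = dV_i$ in $\F_q^\times/(\F_q^\times)^2$ for every $V_i$.

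To finish, I would extend the strategy of Lemma \ref{CasesLemma} with modifications that become available thanks to $-1 \in (\F_q^\times)^2$.  For a type $1^{\pm}$ subspace $V_i = U_i \oplus W_i$ of dimension $4e$, the bicyclic structure supplies inverting involutions that swap $U_i$ with $W_i$ as well as those acting on each summand separately; an argument paralleling \cite[Lemma 5.8]{KnTh98} should produce a $\GL(V_i)$-conjugate $s_i'$ of $s_i$ inside $\gO(V_i)$ with $\det(s_i') = \det(s_i)$ and $\theta(s_i')/\theta(s_i)$ any prescribed non-square, adding a third case to Lemma \ref{CasesLemma}.  When no type $1^{\pm}$ subspace appears, the idea is instead to modify $s$ on a pair of subspaces, for instance replacing $s_i$ by $-s_i$ simultaneously on two type $2^{\pm}$ subspaces with non-square discriminants, so as to flip $\det(s)\theta(s)$; the condition $\det(g)\theta(g) = 1$ coming from $g \in \gT^{\pm}(n,q)$ should force enough compatible subspaces to make such a pair available.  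The main obstacle is certifying that a valid combined modification always exists in this residual sub-case: this requires computing $\det(g|_{V_i})$ and $\theta(g|_{V_i})$ for each indecomposable type and tracking parity and square-class bookkeeping under $V = \oplus_i V_i$, with $-1 \in (\F_q^\times)^2$ playing the crucial role in turning paired $\theta$ contributions into squares.
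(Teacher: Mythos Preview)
Your reduction via Lemma \ref{CasesLemma} is fine, but the residual case is not actually handled.  The proposed modification on a type $2^{\pm}$ subspace $V_i$ with non-square discriminant does nothing for the $\gT$-coset of $s$: since $\dim V_i$ is odd we have $\det(-I_{V_i}) = -1$, while $\theta(-I_{V_i}) = dV_i$ is non-square, so $-I_{V_i} \in \gT(V_i)$.  Replacing $s_i$ by $-s_i$ therefore flips \emph{both} $\det(s)$ and $\theta(s)$, leaving $s$ in the same coset of $\gT^{\pm}(n,q)$; performing this on one, two, or any number of such subspaces has no effect, so the ``pair'' strategy cannot move $s$ into $\gT^{\pm}(n,q)$.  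For type $1^{\pm}$ you only speculate that an analogue of \cite[Lemma 5.8]{KnTh98} exists for bicyclic $(t\pm 1)$-blocks; that lemma is specific to types $2^*$ and $3$, and no argument is supplied here.  Even granting the type $1^{\pm}$ claim, the case where $V$ decomposes entirely into type $2^{\pm}$ blocks with non-square discriminant together with type $2^*$ or $3$ blocks of dimension divisible by $4$ remains open in your outline.

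The paper's proof avoids this obstruction by abandoning the attempt to perturb a given Wonenburger involution and instead building an inverting involution in $\gT^{\pm}(n,q)$ from scratch, block by block.  One computes $\det(g_i)$ and $\theta(g_i)$ on each $V_i$: by \cite[Lemma 4.7]{KnTh98} one has $g_i \in \Omega(V_i)$ for types $1^{\pm}$ and $2^-$, and $g_i \in \gT(V_i) \setminus \Omega(V_i)$ for type $2^+$ with non-square discriminant.  These blocks have dimension odd or divisible by $4$, so Theorem \ref{Omsr} (using $q\equiv 1(\mmod 4)$) supplies an inverting involution in $\Omega(V_i)$ for the former types, and \cite[Lemma 5.1]{KnTh98} yields one in $\gT(V_i)$ for type $2^+$.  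The remaining type $2^*$ and $3$ blocks are gathered into a single space $Y$ of dimension divisible by $4$; a parity count on the number of type $2^+$ blocks forces $g_Y \in \Omega(Y)$, and Theorem \ref{Omsr} applies once more.  The idea you are missing is to track $(\det,\theta)$ of $g$ itself on each block and invoke the known strong reality of $\Omega$ in the dimensions that arise, rather than trying to locally correct an arbitrary inverting involution.
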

\begin{proof} Let $g \in \gT^{\pm}(n,q)$ with $V = \F_q^n = \oplus V_i$ the orthogonal decomposition corresponding to $g$.  By Lemma \ref{CasesLemma}, we may assume that any $V_i$ of type $2^*$ or $3$ has dimension divisible by 4, and any $V_i$ of type $2^{\pm}$ has discriminant $dV_i \neq (\F_q^{\times})^2$.

For any subspace $V_i$ of type $1^{\pm}$ or $2^-$, it follows from \cite[Lemma 4.7(c)]{KnTh98} that $g_i = g_{V_i} \in \Omega(V_i)$.  Since $q \equiv 1($mod $4)$ and $\mathrm{dim}(V_i)$ must be either odd or divisible by 4 in these cases, it follows from Theorem \ref{Omsr} that $g_i $ is strongly real in $\Omega(V_i)$.

Consider a subspace $V_i$ of type $2^+$.  Then $\mathrm{det}(g_i) = -1$ by definition, and by assumption $dV_i \neq (\F_q^{\times})^2$.  From \cite[Lemma 4.7(d)]{KnTh98}, we also have $\theta(g_i) = dV_i \neq (\F_q^{\times})^2$, and it follows that we have $g_{i} \in \gT(V_i) \setminus \Omega(V_i)$.   By \cite[Lemma 5.1]{KnTh98}, $g_{i}$ can be inverted by an orthogonal involution $s_{i}$ such that either $\mathrm{det}(s_{i}) = 1$ and $\theta(s_{i}) = \pm (\F_q^{\times})^2 = (\F_q^{\times})^2$ (since $q \equiv 1($mod $4)$), or such that $\det(s_{i}) = -1$ and $\theta(s) = \pm dV_i \neq (\F_q^{\times})^2$ (by our assumption on $dV_i$ and since $q \equiv 1($mod $4)$).  In either case, we have $s_{i} \in \gT(V_i)$.  

 Now let $Y$ be the orthogonal direct sum of all subspaces of type $2^*$ or type $3$, and so $Y$ has dimension divisible by $4$, and let $X$ be the orthogonal direct sum of all other subspaces $V_i$.  If $g \in \Omega^{\pm}(n,q)$, then the number of subspaces of type $2^+$ must be even, and it follows from the previous two paragraphs that $g_X \in \Omega(X)$.  Then we must also have $g_Y \in \Omega(Y)$.  If $g \in \gT^{\pm}(n,q) \setminus \Omega^{\pm}(n,q)$, then $g \not\in \SO^{\pm}(n,q)$ and so the number of subspaces of type $2^-$ must be odd.  Again, from the above it then follows that $g_X \in \gT(X) \setminus \Omega(X)$, and so we must have $g_Y \in \Omega(Y)$.  In any case, we have $g_Y \in \Omega(Y)$ with $\mathrm{dim}(Y)$ divisible by $4$ and $q \equiv 1($mod $4)$, and so $g_Y$ is strongly real in $\Omega(Y)$ by Theorem \ref{Omsr}.  We now have $g = g_X \oplus g_Y$ with $g_X$ strongly real in $\gT(X)$ and $g_Y$ strongly real in $\Omega(Y)$, so that $g$ is strongly real in $\gT^{\pm}(n,q)$.
\end{proof}

\begin{theorem} \label{realomega} Let $q \equiv 1(\mmod 4)$ and $m \geq 1$.  In either of the groups $G=\Omega^+(4m+2,q)$ or $G/Z = \POm^+(4m+2,q)$, every real class is strongly real.
\end{theorem}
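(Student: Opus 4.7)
The plan is to bootstrap from Proposition~\ref{T1mod4}, refining the inverting involution it produces to lie in $\Om^+$ (and not merely $\gT^+$), and then to deduce the $\POm^+$ statement. Let $g \in G = \Om^+(4m+2, q)$ be real in $G$. Since $\Om^+ \subseteq \gT^+$, $g$ is also real in $\gT^+$, so by Proposition~\ref{T1mod4} we may write $g = s_1 s_2$ with $s_1, s_2$ involutions in $\gT^+$. Because $g$ lies in the trivial coset of $\gT^+/\Om^+ \cong \Z/2\Z$, either both $s_i \in \Om^+$ (and we are done), or both lie in $\gT^+ \setminus \Om^+$; in the latter case the task is to modify $s_1$ to land in $\Om^+$ while preserving the inverting property.

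We revisit the piecewise construction along the orthogonal $g$-decomposition $V = \oplus_i V_i$. For each $V_i$, multiplying $s_i$ by $-I_{V_i}$ yields another inverting involution and shifts $(\det(s_i), \theta(s_i))$ by $((-1)^{\dim V_i}, dV_i)$, and for pieces of type $2^*$ or $3$ we additionally have the $\theta$-shifting freedom of \cite[Lemma~5.8]{KnTh98}. The key local observation is that for $V_i$ of type $2^+$ with $dV_i \notin (\F_q^\times)^2$, replacing $s_i$ by $-s_i$ moves it from the $(-1, dV_i)$-coset into the $(1,1)$-coset $\Om(V_i)$, so outside the scenarios handled by Lemma~\ref{CasesLemma} one can always arrange $s \in \Om^+$. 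When the decomposition \emph{does} trigger Lemma~\ref{CasesLemma}---that is, some $V_i$ of type $2^*$ or $3$ has $\dim V_i \equiv 2 \pmod 4$, or some type $2^\pm$ piece has square discriminant---the hypothesis that $g$ is real in $\Om^+$ supplies $h \in \Om^+$ with $hgh^{-1} = g^{-1}$, and then $c = h s_1$ centralizes $g$ and lies in $\gT^+ \setminus \Om^+$. Decomposing $c = \oplus_i c_i$ pinpoints a piece $V_k$ on which either the $-I_{V_k}$ move or a $\theta$-shift absorbs the remaining obstruction and yields the desired involution in $\Om^+$.

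For the $\POm^+$ statement, $q \equiv 1 \pmod 4$ forces $q^{2m+1} \equiv 1 \pmod 4$, so $-I \in \Om^+(4m+2, q)$ and $Z = \{\pm I\}$. If $\bar g = gZ$ is real in $\POm^+$, then some $h \in \Om^+$ satisfies $hgh^{-1} \in \{g^{-1}, -g^{-1}\}$. When $hgh^{-1} = g^{-1}$ we reduce to the $\Om^+$ case above. When $hgh^{-1} = -g^{-1}$ we seek $s \in \Om^+$ with $s^2 \in Z$ and $sgs^{-1} \in \{g^{-1}, -g^{-1}\}$; a parallel piecewise analysis, now allowing both $s^2 = I$ and $s^2 = -I$, produces such an $s$. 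The main obstacle throughout is the case analysis of the previous paragraph: carefully tracking the $(\det,\theta)$-coset of the candidate $s$ piece by piece, and showing that the centralizer information supplied by $g$ being real in $\Om^+$---rather than merely in $\gT^+$---is precisely enough to absorb the obstructions left by Lemma~\ref{CasesLemma} and Proposition~\ref{T1mod4}, landing $s$ in $\Om^+$.
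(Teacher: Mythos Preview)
Your plan diverges from the paper's proof at two places where the gap is real. For the $\Om^+(4m+2,q)$ statement, the paper uses \emph{two} index-two overgroups: both $\K^+(4m+2,q)$ (strongly real by Theorem~\ref{Ksr}(i)) and $\gT^+(4m+2,q)$ (Proposition~\ref{T1mod4}) supply inverting involutions $\sigma\in\K^+$ and $s\in\gT^+$. The decisive external input, which you do not invoke, is \cite[Theorem~7.2]{ScVi16}: any element real in $\SO^+(4m+2,q)$ must have an elementary divisor $(t\pm 1)^e$ with $e$ odd, hence a type~$2^{\pm}$ piece $V_j$. Then the single move $\sigma_j\mapsto -\sigma_j$ (if $dV_j$ is a square) or $s_j\mapsto -s_j$ (if $dV_j$ is a non-square) lands in $\Om^+$, because one of the two starting involutions already sits in the correct $\theta$-coset or $\det$-coset and only one coordinate needs to flip. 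Your plan uses only $\gT^+$, so when every type~$2^{\pm}$ piece has square discriminant you start with $s_1$ having $(\det,\theta)=(-1,\text{nontrivial})$, and the $-I_{V_j}$ move flips only $\det$, landing you in $\SO^+\setminus\Om^+$. You then invoke a centralizer element $c=hs_1\in\gT^+\setminus\Om^+$, but you do not explain how the coset information carried by $c$ (which decomposes over primary components of $g$, not over the individual orthogonally indecomposable $V_i$) manufactures a further \emph{involution} modification; as written this is a hope, not an argument.

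For $\POm^+(4m+2,q)$ the gap is larger. The paper does not search for an $s$ with $s^2=\pm I$ conjugating $g$ to $-g^{-1}$; instead it shows that in dimension $4m+2$ no $g\in\Om^+$ can be conjugate to $-g^{-1}$ without already being real in $\Om^+$. The argument is pure elementary-divisor combinatorics: if $g$ has no type~$2^{\pm}$ piece (otherwise the first part applies), then pairing each $f(t)^e$ with $\tilde f(t)^e$ (roots related by $\alpha\mapsto -\alpha^{-1}$) forces every type~$1^{\pm}$, $2^*$, and $3$ contribution to have dimension divisible by $4$---using, crucially, that $t^2+1$ splits when $q\equiv 1\pmod 4$, so a self-paired irreducible $f$ of type~$2^*$ satisfies $f(t)=f_1(t^2)$ with $\deg f_1$ even---contradicting $\dim V=4m+2$. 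Your ``parallel piecewise analysis allowing $s^2=-I$'' does not engage this mechanism and would require building orthogonal elements squaring to $-I$ on each piece, which is a separate and nontrivial problem you have not addressed.
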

\begin{proof}  From Theorem \ref{Ksr} and Proposition \ref{T1mod4}, we have both $\K^+(4m+2,q)$ and $\gT^+(4m+2,q)$ are strongly real.  Letting $g \in \Omega^+(4m+2,q)$, we have $g$ can be inverted by an involution $\sigma \in \K^+(4m+2,q)$ and by an involution $s \in \gT^+(4m+2,q)$.  We may assume $\sigma, s \not\in \Omega^+(4m+2,q)$.

Let $V = \F_q^{4m+2}$ and $V = \oplus V_i$ be the orthogonal decomposition of $V$ corresponding to $g$.  Suppose that $V$ has a subspace $V_j$ of type $2^{\pm}$.  Then $V_j$ has odd dimension and $\det(-I_{V_j}) = -1$.  If $dV_j = (\F_q^{\times})^2$, then by \cite[pgs. 230, 222]{CaEn04} we have $\theta(-I_{V_j}) = (\F_q^{\times})^2$.  If we write $\sigma = \oplus \sigma_i$ with $\sigma_i = \sigma_{V_i}$, then define $\sigma' = (-\sigma_j) \oplus \bigoplus_{i \neq j} \sigma_i$.  Then we have $\det(\sigma') = -\det(\sigma)$ and $\theta(\sigma') = \theta(\sigma)$, so that $\sigma' \in \Omega^+(4m+2,q)$ and $\sigma'$ is an involution which inverts $g$, so $g$ is strongly real in $\Omega^+(4m+2,q)$.  If $dV_j \neq (\F_q^{\times})^2$, then $\theta(-I_{V_j}) \neq (\F_q^{\times})^2$, again from \cite[pgs. 230, 222]{CaEn04}.  Writing $s = \oplus s_i$, we define $s' = (-s_j) \oplus \bigoplus_{i \neq j} s_i$.  Now we have $\det(s') = -\det(s)$ and $\theta(s') \neq \theta(s)$, so that $s' \in \Omega^+(4m+2,q)$ is an inverting involution for $g$.  Thus if $V$ has a subspace of type $2^{\pm}$, then $g$ is strongly real in $\Omega^+(4m+2,q)$.

Now suppose $g \in \Omega^+(4m+2,q)$ is a real element in this group.  In particular, $g$ is then real as an element of $\SO^+(4m+2,q)$.  By \cite[Theorem 7.2]{ScVi16}, this implies that $g$ has an elementary divisor of the form $(t \pm 1)^e$ with $e$ odd, which means that $V$ has a subspace of type $2^{\pm}$.  Thus $g$ is strongly real in $\Omega^+(4m+2,q)$, concluding the proof of the first claim.

To prove the statement in the group $G/Z = P\Omega^+(4m+2,q)$, where $Z = \{ \pm I \}$, we must also consider elements $g \in \Omega^+(4m+2,q) = G$ such that $g$ is conjugate to $-g^{-1}$ in $G$, but $g$ is not real in $G$ (otherwise we already know $g$ is strongly real in $G$).  Since $g$ and $-g^{-1}$ are conjugate, then for any elementary divisor $f(t)^e$ of $g$, there is also an elementary divisor $\tilde{f}(t)^e$ of $g$ with the same multiplicity, where if the roots of $f(t)$ in an algebraic closure $\overline{\F}_q$ are given by the Frobenius orbit of $\alpha \in \overline{\F}_q^{\times}$, then the roots of $\tilde{f}(t)$ are given by the Frobenius orbit of $-\alpha^{-1}$ (noting that the action $\alpha \mapsto \alpha^q$ commutes with $\alpha \mapsto -\alpha^{-1}$).

Continue to write $V = \oplus_i V_i$ for the orthogonal decomposition of $V$ corresponding to $g$.  Since we are assuming $g$ is not real in $G$, then from the above we may assume $V$ has no subspaces of type $2^{\pm}$.  Note also that subspaces of $V$ which are type $1^{\pm}$ have dimension divisible by 4.  We now consider subspaces $V_i$ of type $2^*$ or type $3$.

Consider a subspace $V_i$ of type $2^*$, which then corresponds to an elementary divisor $f(t)^e$ on $V_i$, where $f(t)$ is irreducible in $\F_q[t]$ and is self-reciprocal, with $f(t) \neq t \pm 1$.  That is, the set of roots of $f(t)$ in $\overline{\F}_q$ is invariant under the map $\alpha \mapsto \alpha^{-1}$.  In particular, $f(t)$ has even degree.  Then also $\tilde{f}(t)^e$ is an elementary divisor, where $\tilde{f}(t)$ is also self-reciprocal, corresponding to a subspace $\tilde{V}_i$ of type $2^*$ with the same dimension as $V_i$.  If $f(t) \neq \tilde{f}(t)$, then $V_i \oplus \tilde{V}_i$ has dimension divisible by $4$.  If $f(t) = \tilde{f}(t)$, then the set of roots of $f(t)$ is invariant under both the maps $\alpha \mapsto \alpha^{-1}$ and $\alpha \mapsto -\alpha^{-1}$, and so is also invariant under $\alpha \mapsto -\alpha$.  This implies the only nonzero coefficients of $f(t)$ are those of even powers of $t$, so that $f(t) = f_1(t^2)$ for some irreducible self-reciprocal polynomial $f_1(t) \in \F_q[t]$.  Also $f_1(t) \neq t \pm 1$ since neither of $t^2 \pm 1$ are irreducible (since $q \equiv 1($mod $4)$), and so $f_1(t)$ must have even degree.  Then $f(t) = f_1(t^2)$ has degree divisible by $4$, and it follows that the direct sum of all subspaces of $V$ of type $2^*$ has dimension divisible by $4$.

Finally, consider a subspace $V_i$ of type $3$, so $V_i$ is a non-orthogonal direct sum, $V_i = U_i \oplus W_i$, and $g$ has elementary divisors $f(t)^e$ on $U_i$ and $f^*(t)^e$ on $W_i$.  If $\tilde{f}(t) \neq f(t), f^*(t)$, then $V$ also has a subspace $\tilde{V}_i = \tilde{U}_i \oplus \tilde{W}_i$ with elementary divisors $\tilde{f}(t)^e$ and $\tilde{f}^*(t)^e$.  In this case, $V_i$ and $\tilde{V}_i$ each have the same even dimension, and $V_i \oplus \tilde{V}_i$ has dimension divisible by $4$.  If either $f(t) = \tilde{f}(t)$ or $f^*(t) = \tilde{f}(t)$, then the set of roots of $f(t)$ is invariant under one of the order 2 actions $\alpha \mapsto -\alpha$ or $\alpha \mapsto -\alpha^{-1}$, and so $f(t)$ must have even degree.  Then $U_i$ and $W_i$ each have even degree, and so $V_i$ has dimension divisible by $4$.

Through exhausting all cases, it follows that if $g$ is not real but $g$ is conjugate to $-g^{-1}$ in $G$, then $\dim(V)$ is divisible by $4$.  This contradicts the fact that $\dim(V) =4m+2$, and so $G$ has no such elements.  Thus all real classes of $G/Z = \POm^+(4m+2, q), \, q \equiv 1(\mmod 4)$, are strongly real.
\end{proof}

We now assume $q \equiv 3($mod $4)$ and $m \geq 1$, with $G = \Om^-(4m+2,q)$ and $G/Z = \POm^-(4m+2,q)$, and we show both $G$ and $G/Z$ always have \emph{weakly real} elements, that is, elements which are real but not strongly real.

\begin{theorem} \label{Weakly}
Let $q \equiv 3(\mmod 4)$ and $m \geq 1$.  Then $\POm^-(4m+2,q)$ contains weakly real elements.
\end{theorem}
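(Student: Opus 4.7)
The plan is to construct, for each $m \geq 1$, an explicit element $g \in \Om^-(4m+2,q)$ whose image in $\POm^-(4m+2,q)$ is real but not strongly real. Because $q \equiv 3 \pmod 4$ and $\dim V \equiv 2 \pmod 4$, one checks that $-I \in \Om^-(4m+2,q)$, so the center is $Z = \{\pm I\}$; thus reality in $\POm^-$ means $g$ is $\Om^-$-conjugate to $g^{-1}$ or to $-g^{-1}$, while strong reality in $\POm^-$ means there exists $s \in \Om^-$ with $s^2 \in \{\pm I\}$ satisfying $sgs^{-1} \in \{g^{-1}, -g^{-1}\}$.

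First I would establish the result in a small base dimension through explicit matrix constructions (this is the role of the Appendix). The base case produces an element $g_0 \in \Om^-(4m_0+2,q)$ whose orthogonal decomposition is carefully chosen so that: (i) some $\sigma_0 \in \Om^-(V_0)$ inverts $g_0$ up to $\pm I_{V_0}$, but (ii) for every such candidate $\sigma_0$, the spinor norm and determinant constraints on $V_0$ force $\sigma_0^2 \not\in \{\pm I_{V_0}\}$, so no inverting element descends to an involution of $\POm^-(V_0)$.

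For general $m > m_0$, I would bootstrap by setting $g = g_0 \oplus h$ on $V = V_0 \oplus W$, where $W$ has dimension $4(m - m_0)$ of the type needed so that $V$ inherits $-$-type, and $h$ is a strongly real element on $W$ whose elementary divisors (together with their reciprocals and their negatives under $\alpha \mapsto -\alpha^{-1}$) are disjoint from those of $g_0$. Reality of $g$ in $\POm^-(V)$ follows from a block-diagonal construction: $\sigma_0 \oplus \sigma_W$, where $\sigma_W$ is an involution of the orthogonal group on $W$ inverting $h$, lies in $\Om^-(V)$ by additivity of $\theta$ and $\det$ across the orthogonal direct sum, and inverts $g$ up to sign. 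For non-strong reality, suppose $s \in \Om^-(V)$ with $s^2 \in \{\pm I\}$ inverts $g$ up to sign. By the elementary-divisor disjointness, $s$ must preserve each summand, so $s = s_0 \oplus s_W$. Analysing the sign options $s_0^2 \in \{\pm I_{V_0}\},\ s_W^2 \in \{\pm I_W\}$ together with the parity constraints from $\det(s) = 1$ and $\theta(s) = (\F_q^\times)^2$, and exploiting the freedom to modify $s_W$ within the centralizer of $h$ (available since $h$ is strongly real in its orthogonal group), one reduces to the case where $s_0 \in \Om^-(V_0)$ satisfies $s_0^2 \in \{\pm I_{V_0}\}$ and $s_0 g_0 s_0^{-1} \in \{\pm g_0^{-1}\}$, contradicting the base case.

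The main obstacle is the base case itself: constructing $g_0$ and proving its non-strong-reality in $\POm^-(V_0)$ requires ruling out \emph{every} candidate inverting element, a finite but delicate combinatorial check on the elementary-divisor data combined with explicit matrix computation of spinor norms, which is naturally relegated to the Appendix. The bootstrap step is more mechanical but still requires careful tracking of spinor norm and determinant contributions across the two summands to guarantee that $s \in \Om^-$ forces the analogous constraint on $s_0$.
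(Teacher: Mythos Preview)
Your overall strategy---a weakly real base case in low dimension, bootstrapped by orthogonal direct sum with an auxiliary element on a complementary block---matches the paper's. The gap is in the bootstrap.

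You claim that if $s = s_0 \oplus s_W \in \Om^-(V)$ with $s^2 = \pm I$ inverts $g = g_0 \oplus h$, then by ``modifying $s_W$ within the centralizer of $h$'' one reduces to the case $s_0 \in \Om^-(V_0)$. But altering $s_W$ does nothing to $s_0$: once $s$ is given, the component $s_0$ is fixed, and the requirement $s \in \Om^-(V)$ merely ties the $\gO(W)/\Om(W)$-coset of $s_W$ to that of $s_0$. If $h$ is a generic strongly real element of $\gO(W)$, its inverters (even its inverting involutions) may lie in several cosets of $\Om(W)$, so nothing forces $s_0 \in \Om^-(V_0)$ and the contradiction with the base case does not follow. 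Replacing $s_W$ by $s_W c$ for $c \in C_{\gO(W)}(h)$ either leaves the coset unchanged or throws $s_0 \oplus s_W c$ out of $\Om^-(V)$; either way you have not moved $s_0$.

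The paper closes this by choosing the auxiliary element with a much sharper property than mere strong reality. Its $\eta_l$ has elementary divisors $(t^2+1)^2$ repeated, and by \cite[Proposition~16.34]{CaEn04} the centralizer of $\eta_l$ in $\gO^+(W)$ lies entirely in $\Om^+(W)$. Since $\eta_l$ already has one inverter in $\Om^+(W)$ (Theorem~\ref{Omsr} applied to $\Om^+(8,q)$), it follows that \emph{every} inverter of $\eta_l$ in $\gO^+(W)$ lies in $\Om^+(W)$, which forces $s_0 \in \Om^-(V_0)$ automatically---no modification argument is needed. A side effect is that each copy of $\eta$ occupies an $8$-dimensional block: a single $(t^2+1)^2$ block on $\F_q^4$ is not real in $\Om^+(4,q)$ when $q \equiv 3(\mmod 4)$, as the paper's Remark after the proof explains. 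Hence the bootstrap increments the dimension by $8$, and two base cases (dimensions $6$ and $10$, handled in Lemmas~\ref{Om6wr} and~\ref{Om10wr}) are required rather than the single base case your outline envisions.
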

\begin{proof} In Lemma \ref{Om6wr} of the Appendix, we construct an element $h \in \Om^-(6,q)$ such that $hZ$ is weakly real in $\POm^-(6,q)$, where the elementary divisors of $h$ are $(t-1)^2, (t-1)^2, t+1, t+1$.  Now consider an element $\eta \in \Om^+(8,q)$ such that $\eta$ has elementary divisors $(t^2 + 1)^2, (t^2 + 1)^2$, which exists by \cite[Propositions 16.10, 16.30]{CaEn04}, and we may define an element $g_1 \in \Om^-(8l+6,q)$ as a block-diagonal direct sum as
\begin{equation} \label{wrbuild}
g_1 = h \oplus \bigoplus_{i = 1}^l \eta,
\end{equation}
and we show that $g_1 Z$ is weakly real in $\POm^-(8l+6,q)$.  First, we have $\eta_l = \bigoplus_{i=1}^l \eta \in \Om^+(8l,q)$, and since $\Om^+(8,q)$ is strongly real by Theorem \ref{Omsr}, then $\eta_l$ is strongly real in $\Om^+(8l,q)$.  It follows from \cite[Proposition 16.34]{CaEn04} that the centralizer of $\eta_l$ in $\gO^+(8l,q)$ is contained completely in $\Om^+(8l,q)$.  Thus any element in $\gO^+(8l,q)$ which conjugates $\eta_l$ to its inverse must be in $\Om^+(8l,q)$.  By \cite[Case (C), (iv) of 2.6]{Wall}, since the eigenvalues of $h$ are distinct from those of $\eta_l$, we have
$$ C_{\gO^-(8l+6,q)}(g_1) \cong C_{\gO^-(6,q)}(h) \times C_{\gO^+(8l,q)}(\eta_l) = C_{\gO^-(6,q)}(h) \times C_{\Om^+(8l,q)}(\eta_l).$$
We therefore have
\begin{equation} \label{wrcent}
C_{\Om^-(8l+6,q)}(g_1) \cong C_{\Om^-(6,q)}(h) \times C_{\Om^+(8l,q)}(\eta_l).
\end{equation}
Next notice that $g_1$ is real in $\Om^-(8l+6,q)$ since $h$ is real in $\Om^-(6,q)$ and $\eta_l$ is real in $\Om^+(8l,q)$, and so $g_1Z$ is real in $\POm^-(8l+6,q)$.  In particular, there is an element $ x_1 = (x, x_l) \in \Om^-(6,q) \times \Om^+(8l,q)$ such that $xhx^{-1} = h^{-1}$ and $x_l \eta_l x_l^{-1} = \eta_l^{-1}$, and so $x_1 g_1 x_1^{-1} = g_1^{-1}$ in $\Omega^-(8l+6,q)$.  Since any two elements which conjugate $g_1$ to its inverse differ by an element of the centralizer, then from \eqref{wrcent} we have every element of $\Omega^-(8l+6,q)$ which conjugates $g_1$ to its inverse must be of the form $(x, x_l) \in \Om^-(6,q) \times \Om^+(8l,q)$.  If $g_1 Z$ is strongly real in $\POm^-(8l+6,q)$, and since the elementary divisors of $g_1$ prevent the possibility that $g_1$ is conjugate to $-g_1^{-1}$, we suppose that $x_1 g_1 x_1^{-1} = g_1^{-1}$ with $x_1^2 = \pm I$.  Then $x_1 = (x, x_l)$ with $x^2 = \pm I$ and $xhx^{-1} = h^{-1}$ for $x \in \Om^-(6,q)$.  This contradicts the fact that $hZ$ is weakly real in $\Om^-(6,q)$, and thus $g_1 Z$ must be weakly real in $\POm^-(8l+6, q)$.

In Lemma \ref{Om10wr}, we construct an element $h_0 \in \Omega^-(10,q)$ such that $h_0 Z$ is weakly real in $\POm^-(10,q)$, where $h_0$ has elementary divisors $$(t-1)^3, (t-1)^3, (t+1)^2, (t+1)^2.$$  Then we consider the element $g_0 = (h_0, \eta_l) \in \Om^-(10, q) \times \Om^+(8l,q)$, so we repeat the construction in \eqref{wrbuild} with $h$ replaced by $h_0$.  By repeating the same argument, we obtain $g_0 Z$ is weakly real in $\POm^-(8l+10,q)$.  This gives that every $\POm^-(4m+2, q)$, $q \equiv 3(\mmod 4)$, $m \geq 1$, has weakly real elements.
\end{proof}

\noindent {\bf Remark.} In the above construction, it would of course be much simpler to just add a single elementary divisor of the form $(t^2 + 1)^2$, so that only one base case is needed.  However, this does not work because such an element is not real in $\Omega^+(4,q)$ when $q \equiv 3($mod $4)$.

\section{Characteristic two} \label{CharTwo}

We now assume that $q$ is a power of 2, and consider the vector space $V = \F_q^{2n}$ with $Q$ a non-defective quadratic form on $V$, and $B$ the associated non-degenerate symplectic form on $V$.  All non-degenerate symplectic forms on $V$ are equivalent, and so we may always embed the orthogonal group $\gO^{\pm}(2n, q)$ associated with $Q$ in the symplectic group $\Sp(2n, q)$ associated with $B$, and there is only one such symplectic group up to isomorphism.  We also recall that given $g \in \gO^{\pm}(2n, q)$, we have $g \in \Om^{\pm}(2n, q)$ if and only if $\mathrm{rank}(g + I)$ is even \cite[Proposition 3.2]{Ra11}.

Given any $g \in \Sp(2n, q)$ (and so any $g \in \Om^{\pm}(2n, q)$), we now describe an orthogonal decomposition (with respect to $B$) of $V = \F_q^{2n}$ which is somewhat similar to the case when $q$ is odd.  We follow the description given in \cite[Section 1]{Gow81}, and these results are due to Huppert \cite{Hu80}.  We say a subspace $W$ of $V$ is symplectically indecomposable with respect to $g$ if $W$ has no $B$-orthogonal decomposition into non-trivial subspaces which are $g$-invariant.  Then $V$ can be orthogonally decomposed, as $V = \oplus V_i$, into symplectically indecomposable $g$-invariant subspaces $V_i$ which are one of the following forms:
\begin{enumerate}
\item[(1)] $V_i$ is a (non-orthogonal) direct sum, $V_i = U_i \oplus W_i$, of degenerate totally isotropic $g$-invariant cyclic subspaces (with $\dim_{\F_q}(U_i) = \dim_{\F_q}(W_i)$), and $g$ has a single elementary divisor $(t-1)^e$ on both $U_i$ and $W_i$;
\item[(2)] $V_i$ is $g$-cyclic, such that $g$ has a single elementary divisor $f(t)^e$, where $f(t)$ is irreducible and self-reciprocal, and if $f(t) = t-1$ then $e$ is even;
\item[(3)] $V_i$ is $g$-cyclic, and $V_i$ a (non-orthogonal) direct sum $V_i = U_i \oplus W_i$ of totally isotropic $g$-invariant subspaces (with $\dim_{\F_q}(U_i) = \dim_{\F_q}(W_i)$), such that $g$ has a single elementary divisor $f(t)^e$ on $U_i$, and a single elementary divisor $f^*(t)^{e}$ on $W_i$, such that $f(t)$ is irreducible and $f(t) \neq f^*(t)$.
\end{enumerate}
We continue to say in case (1) that $V_i$ is \emph{bicyclic} with respect to $g$.  Note that $e$ can be even or odd in case (1), while $e$ must be even in case (2) (different from the case when $q$ is odd).  Given the decomposition $V = \oplus V_i$, we will also continue to write $\gO(V_i)$ and $\Om(V_i)$ for the groups corresponding to the quadratic form $Q$ restricted to the subspace $V_i$.

R\"{a}m\"{o} proved that every element of $\Om^{\pm}(4m, q)$ is strongly real by considering the decomposition $V = \oplus V_i$.  By applying the work done there, we obtain the following statement for the group $\Om^{\pm}(4m+2, q)$.

\begin{proposition} \label{CharTwoTwist}  Let $q$ be a power of $2$, and let $g \in \Om^{\pm}(4m+2, q)$.  Then there exists an element $h \in \gO^{\pm}(4m+2, q) \setminus \Om^{\pm}(4m+2, q)$ such that $h^2 = 1$ and $h g h^{-1} = g^{-1}$.
\end{proposition}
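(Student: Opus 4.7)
The plan is to imitate Rämö's proof in \cite{Ra11} that $\Om^{\pm}(4m, q)$ is strongly real, now carried out in dimension $4m+2$. Given $g \in \Om^{\pm}(4m+2, q)$, I would form the Huppert orthogonal decomposition $V = \bigoplus_{i} V_i$ into symplectically indecomposable $g$-invariant subspaces of types (1), (2), or (3) recalled above. On each block $V_i$, Rämö constructs an involution $h_i \in \gO(V_i)$ inverting $g_i = g|_{V_i}$, and her constructions depend only on the type of $V_i$, the elementary divisor of $g_i$ there, and the restriction of $Q$ to $V_i$, so they transfer without modification. The assembled element $h = \bigoplus_i h_i \in \gO^{\pm}(4m+2,q)$ is then an involution with $h g h = g^{-1}$.

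The new task is to verify that $h \notin \Om^{\pm}(4m+2, q)$. Using the criterion $x \in \Om^{\pm}(2n, q)$ if and only if $\rnk(x + I)$ is even from \cite[Proposition 3.2]{Ra11}, this reduces to showing
\begin{equation*}
\rnk(h + I) = \sum_i \rnk(h_i + I)
\end{equation*}
is odd. I would track, type by type, the rank of $h_i + I$ in Rämö's construction. For blocks of types (1) and (3), her involution exchanges the two totally isotropic halves $U_i$ and $W_i$ of $V_i$; its fixed subspace is the graph of an isomorphism $U_i \to W_i$, which is half-dimensional, giving $\rnk(h_i + I) = \dim(V_i)/2$. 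For type (2) blocks an analogous computation yields the same formula. Summing over all blocks gives
\begin{equation*}
\rnk(h + I) = \sum_i \dim(V_i)/2 = \dim(V)/2 = 2m+1,
\end{equation*}
which is odd, placing $h$ in $\gO^{\pm}(4m+2,q) \setminus \Om^{\pm}(4m+2,q)$.

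The main obstacle will be the local rank calculation, particularly in type (2) with $f(t) = t-1$ and $e$ even, where Rämö's inverting involution is considerably more intricate than a pure swap and its explicit matrix form must be examined to confirm the parity. If for some block the direct construction fails to satisfy $\rnk(h_i + I) \equiv \dim(V_i)/2 \pmod{2}$, one has the latitude to replace $h_i$ within its coset modulo $C_{\gO(V_i)}(g_i)$ of inverting elements by another involution of the opposite parity, which is enough since only the global sum matters. Conceptually, the argument succeeds because $\dim V = 4m+2 \equiv 2 \pmod{4}$ forces an imbalance in the local parities, so that the same construction which lands in $\Om$ when $\dim V \equiv 0 \pmod 4$ must instead land outside $\Om$ here.
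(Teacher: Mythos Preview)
Your strategy is the paper's: decompose $V$ into symplectically indecomposable $g$-blocks, lift R\"am\"o's local inverting involutions $h_i$, and control the parity of $\sum_i \rnk(h_i+I)$. The paper, however, does not attempt your uniform formula $\rnk(h_i+I)=\dim(V_i)/2$; it simply quotes R\"am\"o's results case by case. For cyclic $V_i$, \cite[Proposition~3.3]{Ra11} already asserts that the inverting involution lies in $\Om(V_i)$ when $4\mid\dim V_i$ and in $\gO(V_i)\setminus\Om(V_i)$ when $\dim V_i\equiv 2\pmod 4$; for bicyclic $V_i$ with $4\mid\dim V_i$, \cite[Proposition~3.5]{Ra11} supplies an involution in $\Om(V_i)$; and for bicyclic $V_i$ with $\dim V_i\equiv 2\pmod 4$, \cite[Propositions~3.16, 3.17]{Ra11} supply involutions in \emph{both} cosets. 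Since an odd number of blocks have $\dim V_i\equiv 2\pmod 4$, one then chooses so that an odd number of the $h_i$ have odd rank.

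Your type~(2) step is the genuine gap. A cyclic block has no swap structure, and ``an analogous computation yields the same formula'' is not a proof; you yourself flag the unipotent case $f(t)=t-1$, $e$ even, as unresolved. The fix is simply to invoke \cite[Proposition~3.3]{Ra11}, which already records the parity you need. More seriously, your proposed fallback---replacing $h_i$ by an inverting involution of the opposite parity---is not available in general: for a cyclic, non-unipotent block with self-reciprocal $f(t)$ and $\dim V_i\equiv 2\pmod 4$, there may be \emph{no} inverting involution in $\Om(V_i)$ at all (this is exactly the obstruction isolated in the discussion following Lemma~\ref{RamoExt} and driving Theorem~\ref{OmStrong}). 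Fortunately the fallback is never needed: once Proposition~3.3 gives you $\rnk(h_i+I)\equiv\dim(V_i)/2\pmod 2$ on cyclic blocks, your summation $\sum_i\dim(V_i)/2=2m+1$ finishes the argument cleanly.
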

\begin{proof}  Write $V = \oplus V_i$, where each $V_i$ is either cyclic or bicyclic, $\dim(V) = 4m+2$, and $g = \oplus g_i$, where $g_i = g_{V_i}$.  Note that we must have each $\dim(V_i)$ even, since otherwise $V_i$ is cyclic of odd dimension which is not possible.  We consider each possibility for $V_i$.

First suppose $4|\dim(V_i)$.  If $V_i$ is cyclic, then by \cite[Proposition 3.3]{Ra11} there is an involution $h_i \in \Om(V_i)$ which inverts $g_i$.  If $V_i$ is bicyclic, then by \cite[Proposition 3.5]{Ra11} there is an involution $h_i \in \Om(V_i)$ which inverts $g_i$.

Now suppose $\dim(V_i) \equiv 2($mod $4)$.  If $V_i$ is cyclic, then by \cite[Proposition 3.3]{Ra11}, there is an involution $h_i \in \gO(V_i) \setminus \Om(V_i)$ which inverts $g_i$.  If $V_i$ is bicyclic, then by \cite[Propositions 3.16 and 3.17]{Ra11}, there exists an involution $h_i \in \Om(V_i)$ and an involution $h_i' \in \gO(V_i) \setminus \Om(V_i)$, each of which inverts $g_i$.

Since $\dim(V)=4m+2$, then there are an odd number of $V_i$'s such that $\dim(V_i) \equiv 2($mod $4)$.  So, when taking $h = \oplus h_i$, we may choose an odd number of the $h_i$'s such that $\mathrm{rank}(h_i + I_{V_i})$ is odd.  Thus $\mathrm{rank}(h + I)$ is odd, and so $h$ inverts $g$,  $h^2 = 1$, and $h \in \gO^{\pm}(4m+2, q) \setminus \Om^{\pm}(4m+2, q)$.
\end{proof}

We will also need the following statement.

\begin{lemma} \label{RamoExt} If $q$ is a power of $2$ and $u \in \Om^{\pm}(4m,q)$ has a single elementary divisor corresponding to a cyclic space, then $u$ has an inverting involution $h \in \gO^{\pm}(4m,q) \setminus \Om^{\pm}(4m,q)$.
\end{lemma}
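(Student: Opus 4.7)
Given $u \in \Om^\pm(4m,q)$ with a single elementary divisor $f(t)^e$ on the $u$-cyclic space $V = \F_q^{4m}$, I would first rule out $f(t) = t-1$. If $f(t) = t-1$, then $u$ is a single Jordan block of size $4m$, so $\rnk(u+I) = 4m-1$ is odd, contradicting $u \in \Om^\pm(4m,q)$ by \cite[Proposition 3.2]{Ra11}. Hence $f$ is irreducible self-reciprocal with $\deg(f) \geq 2$, and $\deg(f)$ is necessarily even.

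The strategy is to obtain $h$ by twisting R\"am\"o's inverting involution by a centralizer element. By \cite[Proposition 3.3]{Ra11}, there is an involution $h_0 \in \Om^\pm(V)$ with $h_0 u h_0 = u^{-1}$. Since $V$ is $u$-cyclic, $C_{\GL(V)}(u) = \F_q[u]^\times$, and self-reciprocity of $f$ makes $u \mapsto u^{-1}$ a well-defined ring involution on $\F_q[u]$, which I denote by $*$. For any $c \in \F_q[u]^\times$, the identity $h_0 c h_0^{-1} = c^*$ follows from $h_0 u h_0^{-1} = u^{-1}$, while $c \in \Sp(V)$ is equivalent to $c^* c = 1$. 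Consequently, for any $c \in C_{\gO^\pm(V)}(u)$, the product $h := h_0 c$ satisfies $h^2 = h_0 c h_0 c = c^* c = 1$ and $h u h^{-1} = u^{-1}$, and $h$ lies in $\gO^\pm(V) \setminus \Om^\pm(V)$ precisely when $c$ does.

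It therefore suffices to exhibit $c \in C_{\gO^\pm(V)}(u) \setminus \Om^\pm(V)$, and this is the main obstacle. Using the local-ring description $\F_q[u] \cong \F_q[t]/(f(t)^e)$ with its $*$-involution, $C_{\gO^\pm(V)}(u)$ sits inside the $*$-unitary group $\{c \in \F_q[u]^\times : c^* c = 1\}$, and the parity criterion $\rnk(c+I) \pmod{2}$ from \cite[Proposition 3.2]{Ra11} distinguishes $\Om^\pm(V)$ from $\gO^\pm(V)$. The plan is to pick an explicit unitary element of $\F_q[u]^\times$ (for example a lift of a nontrivial norm-one element of the residue field, which is available because $\deg(f) \geq 2$ ensures the residue field is a proper extension of $\F_q$) and verify that $\rnk(c+I)$ is odd. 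The hypothesis $\dim V = 4m$ enters here in ensuring enough room to split off such a residue-field contribution. Carrying out this final parity check on an explicit $c$ is the computational core of the argument.
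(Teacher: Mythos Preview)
Your proposal has a fundamental gap. The case $f(t)=t-1$ that you discard at the outset is precisely the case the lemma is about. The lemma is invoked in the proof of Theorem~\ref{OmStrong} only for unipotent cyclic summands $V_i$ of dimension divisible by $4$, and the paper's own proof cites R\"am\"o's Proposition~3.4, which treats unipotent blocks. The hypothesis ``$u\in\Om^\pm(4m,q)$'' in the lemma statement is evidently a slip for ``$u\in\gO^\pm(4m,q)$''; as you correctly compute, a single Jordan block $(t-1)^{4m}$ never lies in $\Om$.

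Worse, in the non-unipotent case you retain, your twisting strategy provably fails. For $f\neq t-1$ self-reciprocal irreducible of (necessarily even) degree $d$, every $c\in C_{\GL(V)}(u)=\F_q[u]^\times$ has $\rnk(c+I)$ divisible by $d$: writing $c+1 = f(u)^j\cdot(\text{unit})$ in the local ring $\F_q[u]\cong\F_q[t]/(f^e)$, the kernel of multiplication by $c+1$ on $V\cong\F_q[t]/(f^e)$ is the ideal $(f^{e-j})$, of $\F_q$-dimension $jd$, so $\rnk(c+I)=(e-j)d$ is even. Hence $C_{\gO(V)}(u)\subset\Om(V)$, and there is no centralizing $c$ in the other coset to twist by. In particular, your proposed $c$ (a lift of a nontrivial norm-one element $\alpha\neq 1$ from the residue field) has $c+I$ invertible and $\rnk(c+I)=4m$, landing squarely in $\Om$.

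The paper's argument is instead a one-line inspection of the explicit inverting involution constructed in the proof of R\"am\"o's Proposition~3.4 for a unipotent cyclic block of size $n$: the $\Om$-coset is governed by a coefficient $a_{n/2}=\binom{n}{n/2}$, and $\binom{4m}{2m}\equiv 0\pmod 2$ for all $m\geq 1$, forcing odd rank. Ironically, your twisting idea \emph{does} work in the unipotent case you excluded: taking $c=u$ itself gives $c^*c=u^{-1}u=1$, $c\in\gO(V)$, and $\rnk(u+I)=4m-1$ odd, so $h=h_0 u$ is exactly the inverting involution in $\gO(V)\setminus\Om(V)$ you were seeking.
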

\begin{proof} This follows directly from the proof of \cite[Proposition 3.4]{Ra11}, where we get $a_{n/2} = a_{2m} = \binom{4m}{2m} = 0$ in characteristic 2.  The rank of the resulting inverting involution is then odd, and so is in $\gO^{\pm}(4m, q) \setminus \Om^{\pm}(4m, q)$. \end{proof}

R\"{a}m\"{o} also proves that for a unipotent cyclic subspace $V_i$, if $\dim(V_i) \equiv 2($mod $4)$, then there is an inverting involution $h_i' \in \Om(V_i)$ \cite[Proposition 3.4]{Ra11}.  This means that the only case where there may not be an inverting involution in $\Om(V_i)$ is in the case $V_i$ is cyclic, non-unipotent, $\dim(V_i) \equiv 2($mod $4)$, and $V_i$ corresponds to a self-dual elementary divisor $f(t)^e$.  We must have $\deg(f(t))$ even since $f(t)$ is self-dual, and so $e$ must be odd.  We are concerned with the case that these have odd multiplicity, that is, when there are an odd number of such elementary divisors counting multiplicity (otherwise, we have an even number of $h_i \in \gO(V_i) \setminus \Om(V_i)$, and we will get $h = \oplus h_i \in \Om^{\pm}(n,q)$).  These observations motivate the main result of this section, which we now prove.

\begin{theorem} \label{OmStrong}  Let $q$ be a power of $2$, $V = \F_q^{4m+2}$, and $g \in \Om^{\pm}(4m+2,q)$.  Then $g$ is strongly real in $\Om^{\pm}(4m+2,q)$ if and only if either of the following holds:
\begin{enumerate}
\item[(i)] There are an even number of elementary divisors (counting multiplicity) of $g$ of the form $f(t)^{e}$ with $e$ odd, $f(t)$ self-reciprocal, and $\deg(f(t)) = 4r+2$, or
\item[(ii)] In the orthogonal decomposition $V = \oplus V_i$, there exists a $V_i$ on which $g$ has only eigenvalues equal to $1$, other than bicyclic $V_i$ with dimension divisible by $4$.
\end{enumerate}
Moreover, all real classes in $\Om^{\pm}(4m+2,q)$ are strongly real.
\end{theorem}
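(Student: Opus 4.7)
The proof has three parts: the ``if'' direction of the equivalence, the ``only if'' direction, and the ``moreover'' claim. Throughout, by \cite[Proposition 3.2]{Ra11} an element $h \in \gO^{\pm}(4m+2, q)$ lies in $\Om^{\pm}(4m+2,q)$ precisely when $\mathrm{rank}(h+I)$ is even, so the analysis reduces to tracking this parity on involutions $h = \oplus h_i$ built from the decomposition $V = \oplus V_i$. The preceding discussion shows that a problematic $V_i$ admits inverting involutions only in $\gO(V_i) \setminus \Om(V_i)$, a flexible $V_i$ (cyclic unipotent of any dimension, or bicyclic with $\dim(V_i) \equiv 2 \; (\mmod 4)$) admits inverting involutions in both cosets, and every other $V_i$ admits at least one inverting involution in $\Om(V_i)$.

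For the ``if'' direction, I would construct $h = \oplus h_i$ as in the proof of Proposition \ref{CharTwoTwist}. Assuming (i), I would take the forced $\gO(V_i) \setminus \Om(V_i)$ involution at each of the evenly many problematic $V_i$ and an $\Om(V_i)$ involution elsewhere, which makes the total $\mathrm{rank}(h+I)$ even. Assuming (ii), I would make any valid choices on the components other than the flexible $V_i$ supplied by (ii), and then exploit the available flexibility at $V_i$ to correct the overall parity so that $h \in \Om^{\pm}(4m+2,q)$.

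The ``only if'' direction is the main technical obstacle. Assume $g$ has an inverting involution $h \in \Om^{\pm}(4m+2,q)$ and that (ii) fails; the goal is to conclude (i). The plan is to show that $\mathrm{rank}(h+I)$ modulo $2$ is rigidly determined by the structural data of $g$ and equals the number of problematic elementary divisors modulo $2$, which must then vanish because $h \in \Om^{\pm}$. I would decompose $V$ into primary components of $g$ and analyze how $h$ acts: conjugation by $h$ sends the $f$-primary component of $g$ to its $f^*$-primary component, so non-self-reciprocal pairs $\{f, f^*\}$ are interchanged with even rank contribution (by paired transpose-like relations), while each self-reciprocal primary component is preserved and $h$ restricts there as an inverting involution. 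Under the failure of (ii), every eigenvalue-$1$ component is bicyclic with $\dim \equiv 0 \; (\mmod 4)$ and contributes even rank, and the remaining self-reciprocal primary components contribute either even rank (non-problematic) or odd rank (problematic). The main difficulty is that a priori $h$ need not respect a prescribed decomposition $V = \oplus V_i$; handling this requires either modifying the decomposition so $h$ becomes block-diagonal on it, or arguing directly at the level of canonical isotypic pieces via the centralizer structure.

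For the ``moreover'' claim, it suffices to show that any real $g \in \Om^{\pm}(4m+2, q)$ satisfies (i) or (ii). Reality of $g$ provides some inverting element in $\Om^{\pm}$, and combining it with the $\gO^{\pm} \setminus \Om^{\pm}$ inverting involution from Proposition \ref{CharTwoTwist} produces an element $k \in C_{\gO^{\pm}(4m+2,q)}(g)$ with $\mathrm{rank}(k+I)$ odd. I would then argue that when both (i) and (ii) fail, the parity map $\mathrm{rank}(\cdot + I)$ modulo $2$ is identically zero on $C_{\gO^{\pm}(4m+2,q)}(g)$, contradicting the existence of $k$. The centralizer decomposes across the primary components of $g$, and the parity map vanishes on each factor coming from a non-self-reciprocal pair or from a bicyclic $(t-1)^e$ self-reciprocal component with $e$ even; the only contributions of odd rank arise through a flexible $V_i$ (giving (ii)) or through pairing among identical problematic components (giving (i)). This yields the claimed equivalence of real and strongly real in these groups.
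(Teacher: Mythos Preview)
Your ``if'' direction matches the paper's argument. The substantive divergence is in how you handle the other two parts.

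The paper does not separate the ``only if'' direction from the ``moreover'' claim. Instead it proves a single contrapositive: if neither (i) nor (ii) holds, then $g$ is not even real in $\Om^{\pm}(4m+2,q)$. This simultaneously yields the ``only if'' direction (not real implies not strongly real) and the ``moreover'' statement. Your proposed ``moreover'' argument is already this contrapositive in disguise: the element $k$ you produce in the centralizer with odd $\mathrm{rank}(k+I)$ is exactly the obstruction, and showing no such $k$ exists when (i) and (ii) fail is equivalent to showing $C_{\gO^{\pm}}(g) \subset \Om^{\pm}$. So your separate ``only if'' argument, with its acknowledged gap about whether $h$ respects the decomposition, is redundant once your ``moreover'' argument is in place. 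You should drop it.

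For the centralizer step itself, the paper's route is shorter than your proposed case analysis across primary components. The paper writes $V = V_y \oplus V_u$ with $V_u$ the generalized $1$-eigenspace, uses Wall's description to factor $C_{\gO(V)}(g) = C_{\gO(V_y)}(g_y) \times C_{\gO(V_u)}(g_u)$, and then invokes a single external result you are missing: when (ii) fails, $g_u$ is an \emph{exceptional} unipotent in the sense of Fulman--Saxl--Tiep \cite[p.~2547]{FuSaTi12}, and their Theorem~2.5(ii)(a) gives $C_{\gO(V_u)}(g_u) \subset \Om(V_u)$ directly. Combined with the (standard, though not spelled out in the paper) fact that the centralizer of $g_y$, having no eigenvalue $1$, already lies in $\Om(V_y)$, this yields $C_{\gO(V)}(g) \subset \Om^{\pm}(4m+2,q)$ without any parity bookkeeping on individual $V_i$. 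Your approach of tracking ranks component-by-component could in principle recover this, but it is more laborious and you have not indicated how to establish the vanishing on the unipotent factor, which is precisely where the Fulman--Saxl--Tiep input is doing the work.
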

\begin{proof}  We first suppose (i) or (ii) holds.  Write $V = \oplus V_i$ as before, and consider the cases for $V_i$ as in the proof of Proposition \ref{CharTwoTwist}.  If (i) is true, then since $\dim(V) = 4m+2$, there is some other $V_i$ in the orthogonal decomposition such that $\dim(V_i) \equiv 2($mod $4)$.  Such a space is either bicyclic or cyclic and unipotent, in which case we can choose an inverting involution in the correct coset.  If (ii) is true, then we can again choose an inverting involution in the correct coset, by Lemma \ref{RamoExt} and \cite[Propositions 3.4, 3.16, and 3.17]{Ra11}.

For the converse, we assume $g$ does not satisfy (i) or (ii), and we prove that $g$ is not real in $\Om^{\pm}(4m+2,q)$, and this also proves the second claim.  First, by Proposition \ref{CharTwoTwist}, there is an inverting involution $h \in \gO^{\pm}(4m+2,q) \setminus \Om^{\pm}(4m+2, q)$.  If $k \in \gO^{\pm}(4m+2, q)$ is any other inverting involution for $h$, then we must have $k = ha$ where $a \in C_{G}(g)$ and $G = \gO^{\pm}(4m+2, q)$.  Now write $V = V_y \oplus V_u$, where $V_y$ is the direct sum of all $V_i$ on which $g$ has eigenvalues different from $1$, and $V_u$ is the direct sum of those on which $g$ has eigenvalues only equal to $1$.  Write $g = g_y \oplus g_u$, where $g_y = g_{V_y}$ and $g_u = g_{V_u}$.  From the description of centralizers given by Wall \cite[Section 3.7]{Wall}, it follows that we have the direct product
\begin{equation} \label{cent}
C_G(g) = C_{\gO(V_y)}(g_y) \times C_{\gO(V_u)}(g_u).
\end{equation}
Since $g$ does not satisfy condition (ii), then neither does $g_u$.  This implies that $g_u$ is an \emph{exceptional} unipotent element of $\gO(V_u)$, as defined in \cite[pg. 2547]{FuSaTi12}.  It then follows from \cite[Theorem 2.5(ii)(a)]{FuSaTi12} that $C_{\gO(V_u)}(g_u) \subset \Om(V_u)$, and so the conjugacy class of $g_u$ in $\gO(V_u)$ splits into two conjugacy classes in $\Omega(V_u)$.  From \eqref{cent}, the conjugacy class of $g = g_y \oplus g_u$ in $G$ must also split into two conjugacy classes in $\Omega^{\pm}(4m+2,q)$, and so
$$ C_G(g)  \subset \Om^{\pm}(4m+2, q).$$
Since $h \in G \setminus \Om^{\pm}(4m+2, q),$ and $C_G(g) \subset \Om^{\pm}(4m+2,q)$, then every inverting element for $g$ in $\Om^{\pm}(4m+2,q)$ must be in the outer coset $h \Om^{\pm}(4m+2,q)$.  Thus $g$ is not real in $\Om^{\pm}(4m+2,q)$, giving the claim.
\end{proof}

\section{Real-valued characters} \label{Indicators}
In this section, we give some results on real-valued irreducible complex characters of finite simple orthogonal groups, and compare them to the known reality results of their conjugacy classes.

Given a finite group $G$, we let $\Irr(G)$ denote the collection of complex irreducible characters of $G$, so that each $\chi \in \Irr(G)$ is afforded by a complex representation $(\pi, W)$ of $G$.  Given a basis $\mathcal{B}$ of $W$, let $[\pi] = [\pi]_{\mathcal{B}}$ denote the associated matrix representation.  We say that $(\pi, W)$ is a {\it real representation} if there is a basis $\mathcal{B}$ of $W$ such that $[\pi(g)]$ has all real entries for all $g \in G$.    The {\it Frobenius-Schur indicator} of $\chi \in \Irr(G)$, which we denote by $\vep(\chi)$, is given by the formula
$$ \vep(\chi) = \frac{1}{|G|} \sum_{g \in G} \chi(g^2), $$
and takes the values $\vep(\chi) = 0$ if $\chi$ is not real-valued, $\vep(\chi) = -1$ if $\chi$ is real-valued but $(\pi, W)$ is not a real representation, and $\vep(\chi) = 1$ if $(\pi, W)$ is a real representation (see \cite[Chapter 4]{Isaacs}).  In particular, because the number of real conjugacy classes of $G$ is equal to the number of real-valued irreducible complex characters of $G$, then $\vep(\chi) = \pm 1$ for all $\chi \in \Irr(G)$ if and only if $G$ is a real group.

There is a twisted variation of the Frobenius-Schur indicator, due to N. Kawanaka and H. Matsuyama \cite{KaMa90}, which we will need here.  Let $\iota$ be an automorphism of the group $G$, such that $\iota^2 = 1$, and let ${^\iota \chi}$ denote the character defined by ${^\iota \chi}(g) = \chi(\iota(g))$.  If we define $\vep_{\iota}(\chi)$ by
$$ \vep_{\iota}(\chi) = \frac{1}{|G|} \sum_{g \in G} \chi(g \cdot \iota(g)),$$
then $\vep_{\iota}(\chi)$ takes only the values $1, -1, 0$ for $\chi \in \Irr(G)$, and $\vep_{\iota}(\chi) = \pm 1$ if and only if ${^\iota \chi} = \overline{\chi}$.  This twisted variant generalizes the Frobenius-Schur indicator, and has many other interesting properties, but we do not need them here.  We have the following, which is only a slight variation of \cite[Lemma 2.3(i)]{TV17}.

\begin{lemma} \label{WeakIndexTwo} Suppose $H$ and $G$ are finite groups with $H$ an index two subgroup of $G$ such that $G = \langle H, s \rangle$ with $s^2 = 1$.  If $\vep(\chi) = 1$ for all $\chi \in \Irr(G)$, then $\vep(\psi) \geq 0$ for all $\psi \in \Irr(H)$.  If $H$ is a real group, then $\vep(\psi) = 1$ for all $\psi \in \Irr(H)$.
\end{lemma}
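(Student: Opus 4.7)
The plan is to compute the Frobenius--Schur indicator of each $\chi \in \Irr(G)$ directly in terms of the ordinary and twisted indicators of the constituents of $\chi|_H$. Since $s^2 = 1$ and $[G:H] = 2$, conjugation by $s$ induces an automorphism $\iota$ of $H$ with $\iota^2 = 1$, so the twisted indicator $\epi(\psi)$ is defined for every $\psi \in \Irr(H)$. Using $G = H \sqcup sH$ and the key identity $(sh)^2 = \iota(h) h \in H$ (immediate from $s^2 = 1$), I would write
\[
\vep(\chi) \;=\; \frac{1}{2|H|}\left( \sum_{h \in H} \chi(h^2) \;+\; \sum_{h \in H} \chi(\iota(h) h) \right),
\]
so that only $\chi|_H$ enters the computation.

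Next I would invoke Clifford theory for $\psi \in \Irr(H)$ along the index-two extension to $G$. Since $G/H$ has prime order, there are exactly two cases: either $^{\iota}\psi = \psi$, in which case $\psi$ has two extensions $\chi_1, \chi_2 \in \Irr(G)$ with $\chi_i|_H = \psi$; or $^{\iota}\psi \neq \psi$, in which case $\chi := \psi^G \in \Irr(G)$ with $\chi|_H = \psi + {}^{\iota}\psi$. In the first case, direct substitution into the split sum yields
\[
\vep(\chi_i) \;=\; \tfrac{1}{2}\bigl(\vep(\psi) + \epi(\psi)\bigr).
\]
In the second case, using that $\iota$ is a bijection of $H$ and that $h\iota(h)$ is conjugate in $H$ to $\iota(h)h$ (via $h$, so characters agree on them), the same splitting yields $\vep(\chi) = \vep(\psi) + \epi(\psi)$.

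Finally, I would conclude using $\vep(\psi), \epi(\psi) \in \{-1, 0, 1\}$. Under the hypothesis $\vep(\chi) = 1$ for every $\chi \in \Irr(G)$: in the first Clifford case, $\vep(\psi) + \epi(\psi) = 2$ forces each summand to equal $1$; in the second Clifford case, $\vep(\psi) + \epi(\psi) = 1$ rules out $\vep(\psi) = -1$. Either way $\vep(\psi) \geq 0$. For the second assertion, if $H$ is a real group then every $\psi \in \Irr(H)$ is real-valued, so $\vep(\psi) \neq 0$, and combined with $\vep(\psi) \geq 0$ this forces $\vep(\psi) = 1$. The main obstacle is just bookkeeping: the passage between $\vep$ and $\epi$ across an index-two extension is the Kawanaka--Matsuyama framework \cite{KaMa90}, and the argument parallels \cite[Lemma 2.3(i)]{TV17}, with the extra care needed only in tracking the inequality $\vep(\psi) \geq 0$ through the two Clifford cases.
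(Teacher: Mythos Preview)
Your proposal is correct and takes essentially the same approach as the paper: both split the indicator sum over the two cosets of $H$ in $G$ to obtain the identities $\vep(\chi)=\vep(\psi)+\epi(\psi)$ (when $\psi^G$ is irreducible) and $\vep(\chi_i)=\tfrac{1}{2}(\vep(\psi)+\epi(\psi))$ (when $\psi$ extends), then read off $\vep(\psi)\geq 0$ from the constraint $\vep(\chi)=1$. The only cosmetic difference is that you derive the coset formula explicitly, whereas the paper simply cites \cite[Lemma 2.3]{TV17} for it.
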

\begin{proof}  We define $\iota$ on $H$ by ${^\iota g} = s g s^{-1}$.  Given any $\psi \in \Irr(H)$, the induced character $\psi^G$ is either irreducible or the sum of two distinct irreducibles of $G$.  As in the proof of \cite[Lemma 2.3]{TV17}, if $\psi^H = \chi$ irreducible, we have $\vep(\chi) = \vep(\psi) + \epi(\psi)$, while if $\psi^H = \chi_1 + \chi_2$ with $\chi_1, \chi_2 \in \Irr(G)$, we have
$$\vep(\chi_1) + \vep(\chi_2) = \vep(\psi) + \epi(\psi) \; \text{ with } \; \vep(\chi_1) = \vep(\chi_2).$$
Since $\vep(\chi) = 1$ for all $\chi \in \Irr(G)$, and $\vep(\psi)$, $\epi(\psi)$ can only take the values $0$ or $\pm 1$, then in the first case we must either have $\vep(\psi) = 1$ and $\epi(\psi) = 0$, or $\vep(\psi) = 0$ and $\epi(\psi) = 1$.  In the second case, we must have $\vep(\psi) = \epi(\psi) = 1$.  Thus $\vep(\psi) \geq 0$ for all $\psi \in \Irr(H)$ as claimed.  Further, if $H$ is a real group, then $\vep(\psi) = \pm 1$ for all $\psi$, and so we must have $\vep(\psi) = 1$ for all $\psi \in \Irr(H)$.
\end{proof}

We may now obtain the following result on the Frobenius-Schur indicators of the finite simple orthogonal groups.

\begin{theorem} \label{RealVal} Let $q$ be any prime power, $n \geq 1$, and $H = \Om^{\pm}(n, q)$, other than the case $H = \Om^{-}(4m+2, q)$ with $q \equiv 3($mod $4)$.  Then for any $\psi \in \Irr(H)$, we have $\vep(\psi) \geq 0$.  That is, if $\psi$ is a real-valued irreducible character of $H$, then $\psi$ is afforded by a real representation.  The same statement holds for the groups $\POm^{\pm}(n,q)$, other than $\POm^-(4m+2,q)$ with $q \equiv 3($mod $4)$.
\end{theorem}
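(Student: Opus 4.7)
The plan is to combine Lemma \ref{WeakIndexTwo} with the classical result, due to Gow, that every complex irreducible character of the full orthogonal group $\gO^{\pm}(n,q)$ has Frobenius--Schur indicator equal to $+1$, for any prime power $q$. Given this as input, it suffices to descend from $\gO^{\pm}(n,q)$ down to $\Om^{\pm}(n,q)$ along a chain of index-two subgroups, each outer coset of which contains an involution, applying the lemma at each link.

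In characteristic $2$, $\Om^{\pm}(2n,q)$ already has index $2$ in $\gO^{\pm}(2n,q)$, and involutions in the outer coset exist (for example orthogonal transvections, which are involutions of odd rank shift and so lie outside $\Om^{\pm}$ by the criterion recalled in Section \ref{CharTwo}). A single application of Lemma \ref{WeakIndexTwo} with $G = \gO^{\pm}(2n,q)$ and $H = \Om^{\pm}(2n,q)$ yields $\vep(\psi) \geq 0$ for every $\psi \in \Irr(\Om^{\pm}(2n,q))$.

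In odd characteristic, $\Om^{\pm}$ has index $4$ in $\gO^{\pm}$, so two applications of the lemma are required via an intermediate subgroup $H_1$. At the first link ($H_1 \subset \gO^{\pm}$) we need $H_1$ itself to be a real group so that the strong conclusion of Lemma \ref{WeakIndexTwo} upgrades $\vep(\chi) \geq 0$ to $\vep(\chi) = +1$ for all $\chi \in \Irr(H_1)$; the second link ($\Om^{\pm} \subset H_1$) then delivers $\vep(\psi) \geq 0$ for all $\psi \in \Irr(\Om^{\pm})$. The choice of $H_1$ depends on the case: when $n \not\equiv 2 \pmod 4$ take $H_1 = \SO^{\pm}(n,q)$, real by Theorem \ref{SOsr}; when $n \equiv 2 \pmod 4$ and $q \equiv 1 \pmod 4$ take $H_1 = \gT^{\pm}(n,q)$, strongly real by Proposition \ref{T1mod4}; and when $n = 4m+2$, $q \equiv 3 \pmod 4$, and the form is of $+$ type, take $H_1 = \K^{+}(4m+2,q)$, strongly real by Theorem \ref{Ksr}. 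The remaining case $\Om^{-}(4m+2,q)$ with $q \equiv 3 \pmod 4$ is precisely the one excluded from the theorem. In every case, the required involutions in both outer cosets are produced from the standard parametrization of involutions in $\gO^{\pm}$ by their $(-1)$-eigenspace $W$: since $\det = (-1)^{\dim W}$ and the spinor norm equals $dW$ modulo squares, one can jointly prescribe both invariants by choosing the dimension and discriminant class of $W$.

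Finally, the statement for $\POm^{\pm}$ follows at once: the irreducible characters of $\POm^{\pm} = \Om^{\pm}/Z$ correspond bijectively to the irreducible characters of $\Om^{\pm}$ trivial on $Z$, and the Frobenius--Schur indicator is preserved under this correspondence. The main obstacle is the case analysis needed to produce an appropriate intermediate $H_1$ in odd characteristic, together with the verification that each outer coset contains an involution with the prescribed determinant and spinor-norm values; once these cases are enumerated, the argument reduces to two clean applications of Lemma \ref{WeakIndexTwo}.
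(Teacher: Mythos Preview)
Your approach is essentially the paper's: start from Gow's theorem that every $\chi\in\Irr(\gO^{\pm}(n,q))$ has $\vep(\chi)=1$, descend through a real index-$2$ intermediate subgroup using Lemma~\ref{WeakIndexTwo} twice, and then pass to $\POm^{\pm}$ by the observation that indicators are unchanged under central quotients. The only substantive difference is the choice of intermediate subgroup when $n=4m+2$ and $q\equiv 1\pmod 4$: the paper uses $\K^{\pm}(4m+2,q)$ (strongly real by Theorem~\ref{Ksr}(i)), whereas you use $\gT^{\pm}(4m+2,q)$ via Proposition~\ref{T1mod4}; both choices work equally well.

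One small caution on the characteristic-$2$ case: Gow's indicator result \cite{Gow85} is proved only for odd $q$, so your claim that it holds ``for any prime power $q$'' needs a different reference in even characteristic. The paper sidesteps this by quoting \cite{Vi202,Vi20} directly for $\Om^{\pm}(2n,q)$ when $q$ is even, rather than passing through $\gO^{\pm}$; you should either do the same or supply an explicit reference establishing $\vep(\chi)=1$ for all $\chi\in\Irr(\gO^{\pm}(2n,q))$ in characteristic $2$ before applying Lemma~\ref{WeakIndexTwo}.
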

\begin{proof}  If $q$ is a power of $2$, then this statement is proven in \cite[Theorems 8.3 and 8.6]{Vi202} and in \cite[Theorem 4.2]{Vi20} (noting that $\SO(2m+1,q) \cong \Sp(2m,q)$ when $q$ is a power of $2$).  Thus we now assume that $q$ is the power of an odd prime.

For any $\theta \in \Irr(\gO^{\pm}(n,q))$, we have $\vep(\theta) = 1$ by a result of Gow \cite[Theorem 1]{Gow85}.  For each case,  we consider an index 2 subgroup $G$ of $\gO^{\pm}(n,q)$ such that $G$ is strongly real.  If $n \neq 4m+2$, then we can take $G = \SO^{\pm}(n,q)$ by Threorem \ref{SOsr}.  When $n=4m+2$, then we may take $G = K^{\pm}(4m+2,q)$ by parts (i) and (ii) of Theorem \ref{Ksr}.  In particular, for each of these cases $G$ is a real group and $\gO^{\pm}(n, q) = \langle G, t \rangle$ with $t$ any involution in $\gO^{\pm}(n,q) \setminus G$.  It follows from the second statement in Lemma \ref{WeakIndexTwo} that $\vep(\chi) = 1$ for all $\chi \in \Irr(G)$.  Now $[G: H] = 2$, and $G = \langle H, s \rangle$ with $s$ any involution in $G \setminus H$.  It now follows from the first statement in Lemma \ref{WeakIndexTwo} that $\vep(\psi) \geq 0$ for all $\psi \in \Irr(H)$.   Note that in the case that $H=\Om^{\pm}(n,q)$ is itself a real group, then we have $\vep(\psi) = \pm 1$ for all $\psi \in \Irr(H)$, and so it follows that $\vep(\psi) = 1$ for all $\psi \in \Irr(H)$.  This is the case for the groups given in Theorem \ref{Omsr}.

Now consider the group $\tilde{H} = P\Om^{\pm}(n,q) = H/Z$, where $Z=Z(H)$ is the center of $H$.  Given any $\omega \in \Irr(\tilde{H})$, we may define $\psi$ of $H$ by $\psi(h) = \omega(hZ)$.  It follows from a direct computation that $\langle \psi, \psi \rangle = \langle \omega, \omega \rangle = 1$, and so $\psi \in \Irr(H)$.  Then we may compute
$$ \vep(\omega) = \frac{|Z|}{|H|} \sum_{hZ \in \tilde{H}} \omega(h^2 Z) = \frac{|Z|}{|H|} \sum_{hZ \in \tilde{H}} \psi(h^2) = \frac{1}{|H|} \sum_{h \in H} \psi(h^2) = \vep(\psi).$$
The result now follows, since $\vep(\psi) \geq 0$.
\end{proof}

We cannot make the same conclusion for the group $H= \Om^-(4m+2, q)$ with $q \equiv 3($mod $4)$, where to do so it would be enough to show that $H$ is contained in an index 2 subgroup of $\gO^-(4m+2, q)$ which is real (but not necessarily strongly real).  By Theorem \ref{SOsr}, such an index 2 subgroup cannot be $\SO^-(4m+2,q)$.  In fact, one can show that both of the groups $\K^-(4m+2, q)$ and $\gT^-(4m+2, q)$, with $q \equiv 3($mod $4)$, contain elements which are not real.  That is, in the exceptional case $H = \Om^-(4m+2, q)$ with $q \equiv 3($mod $4)$, $H$ is not contained in any real index two subgroup of $\gO^-(4m+2, q)$.  We could also make this conclusion from the above if we construct some $\chi \in \Irr(\Om^-(4m+2, q))$, $q \equiv 3($mod $4)$, such that $\vep(\chi) = -1$.  By \cite[Proposition 2.9.1(vii)]{KL90}, we have $\POm^-(6,q) \cong \PSU(4,q)$, and the character table for $\PSU(4,3)$ reveals the existence of characters of $\POm^-(6,3)$ which indeed have Frobenius-Schur indicator $-1$ \cite[pg. 54]{Atlas}.

Given the results obtained here, we expect that for any values of $n$ and $q$, all real classes of $\Om^{\pm}(n,q)$ or $\POm^{\pm}(n,q)$ are strongly real if and only if all real-valued irreducible complex characters have Frobenius-Schur indicator equal to 1.  We hope to complete the proof of this statement in the sequel, by constructing irreducible complex characters with Frobenius-Schur indicator $-1$ of $\Om^-(4m+2, q)$ and $\POm^-(4m+2,q)$ with $q \equiv 3($mod $4)$, and showing all real classes are strongly real in the groups $\Om(2n+1,q)$, $\Om^+(4m, q)$, and $\POm^+(4m,q)$ with $q \equiv 3($mod $4)$.

\appendix

\section{Weakly real elements}
Here we construct elements $h \in \Omega^-(6,q)$ and $h_0 \in \Omega^-(10,q)$, with $q \equiv 3(\mmod 4)$, such that $hZ$ is weakly real in $\POm^-(6,q)$ and $h_0 Z$ is weakly real in $\POm^-(10,q)$.

We define the matrix $J_n$ to have 1's on the main antidiagonal and 0's elsewhere, so
$$ J_n = \left( \begin{array}{rrr} &  &  1 \\  &  \adots &  \\ 1 &  & \end{array} \right).$$ 

We first consider an element with elementary divisors $(t-1)^2, (t-1)^2$, given by
$$ u = \left( \begin{array}{rrrr} 1 & -1 & & \\  & 1 & & \\ & & 1 & 1 \\ & & & 1 \end{array} \right).$$
If we define a symmetric form on $\F_q^4$ using the Gram matrix $J_4$, then this is a split form, and by \cite[Proposition 16.30]{CaEn04} we have $ u \in \Omega^+(4,q)$.  It follows from \cite[Proposition 16.34]{CaEn04} that the centralizer of $u$ in $\gO^+(4,q)$ is contained completely in $\Omega^+(4,q)$.  Now note that the element
$$ s_0 = \left( \begin{array}{cc} 0 & I \\ I & 0 \end{array} \right)$$
satisfies $s_0 \in \SO^+(4,q)$ and $s u s^{-1} = u^{-1}$.  The $-1$-eigenspace of $s_0$ is spanned by $(1, 0, -1, 0)^{\top}$ and $(0, 1, 0, -1)^{\top}$, and restricting our symmetric form defined by $J_4$ to this subspace, with these vectors as a basis, gives that the symmetric form on this subspace has matrix $\left( \begin{array}{rr} 0 & -2 \\ -2 & 0 \end{array} \right)$.  Since this form has discriminant a non-square (since $-4$ is not a square when $q \equiv 3(\mmod 4)$), then this is a split form on the $-1$-eigenspace.  By \cite[Proposition 16.30]{CaEn04}, we have $s_0 \not\in \Omega^+(4,q)$.  Since one element in $\SO^+(4,q)$ which conjugates $u$ to its inverse is not in $\Omega^+(4,q)$ and the centralizer of $u$ in $\SO^+(4,q)$ is contained in $\Omega^+(4,q)$, then every element of $\SO^+(4,q)$ which conjugates $u$ to its inverse must be in $\SO^+(4,q) \setminus \Omega^+(4,q)$.

We also claim that $u$ is not conjugated to its inverse by any element of $\gO^+(4,q)$ which squares to $-I$.  By direct computation, such an element would have to be of the form (just from it conjugating $u$ to its inverse):
$$ \left( \begin{array}{rrrr} a & b & c & d \\ 0 & -a & 0 & c \\ c_1 & d_1 & a_1 & b_1 \\ 0 & c_1 & 0 & -a_1 \end{array} \right), $$
where $a^2 + c c_1 = -1$ and $ac + ca_1 = 0$, from the assumption that this squares to $-I$.  We cannot have $c = 0$, since otherwise $a^2 = -1$ with $q \equiv 3(\mmod 4)$, and so we have $a_1 = -a$.  Using the fact that this matrix must preserve the form defined by $J_4$, we obtain $c c_1 - a a_1 = 1$, or $a^2 + c c_1 = 1$.  This is a contradiction to $a^2  + c c_1 = -1$, and so such an element cannot exist.

We now consider the symmetric form defined by the Gram matrix
$$ \left( \begin{array}{rr} J_4 & \\ & I_2 \end{array} \right),$$
which defines a non-split form on $\F_q^6$ when $q \equiv 3(\mmod 4)$, and we consider the element 
$$h = \left( \begin{array}{rr} u & \\  & -I_2 \end{array} \right).$$
Then $h$ has elementary divisors $(t-1)^2, (t-1)^2, t+1, t+1$, and $h \in \Omega^-(6,q)$ by \cite[Proposition 16.30]{CaEn04}.

\begin{lemma} \label{Om6wr}
The element $h \in \Omega^-(6,q)$ is such that $hZ$ is weakly real in $\POm^-(6,q)$ when $q \equiv 3(\mmod 4)$.
\end{lemma}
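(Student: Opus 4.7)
The plan is to reduce the claim to a statement purely in $\Om^-(6,q)$, and then to verify reality directly while blocking strong reality via a case analysis. First I would note that $-I \in \Om^-(6,q)$, since $\det(-I) = 1$ and $\theta(-I) = dV$, which is a square for the $-$-type form of dimension $6$ when $q \equiv 3(\mmod 4)$; hence $Z = \{\pm I\}$. From the elementary divisors $(t-1)^2, (t-1)^2, t+1, t+1$, the eigenvalue $1$ has multiplicity $4$ while $-1$ has multiplicity $2$, and these multiplicities are reversed for $-h^{-1}$, so $h$ is not conjugate to $-h^{-1}$. Consequently $hZ$ is real in $\POm^-(6,q)$ exactly when some $x \in \Om^-(6,q)$ satisfies $xhx^{-1} = h^{-1}$, and $hZ$ is strongly real in $\POm^-(6,q)$ exactly when such an $x$ additionally satisfies $x^2 \in \{\pm I\}$.

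For reality, I would first take $s = s_0 \oplus I_2 \in \gO^-(6,q)$, which inverts $h$ (since $s_0$ inverts $u$), but has $\theta(s) = \theta(s_0) \notin (\F_q^{\times})^2$, so $s \notin \Om^-(6,q)$. Pick any $t \in \SO^-(2,q) \setminus \Om^-(2,q)$, which exists because $[\SO^-(2,q) : \Om^-(2,q)] = 2$. Then $x := s_0 \oplus t$ still inverts $h$, has $\det(x) = 1$, and $\theta(x) = \theta(s_0)\theta(t) \in (\F_q^{\times})^2$, so $x \in \Om^-(6,q)$, which establishes reality of $h$ in $\Om^-(6,q)$.

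The main step is ruling out strong reality. Suppose $x \in \Om^-(6,q)$ satisfies $xhx^{-1} = h^{-1}$ and $x^2 = \epsilon I$ for some $\epsilon \in \{\pm 1\}$. The generalized eigenspace decomposition $V = V_y \oplus V_u$ (for eigenvalues $1$ and $-1$ of $h$) is $x$-invariant, so $x = x_1 \oplus x_2$ with $x_1 \in \gO^+(4,q)$ inverting $u$ and $x_2 \in \gO^-(2,q)$. The key coset analysis is as follows: since $C_{\gO^+(4,q)}(u) \subseteq \Om^+(4,q)$ and $s_0$ is a specific inverter lying in $\SO^+(4,q) \setminus \Om^+(4,q)$, every inverter of $u$ lies in the single coset $s_0 \Om^+(4,q)$, forcing $x_1 \in \SO^+(4,q) \setminus \Om^+(4,q)$; the constraints $\det(x) = 1$ and $\theta(x) \in (\F_q^{\times})^2$ then force $x_2 \in \SO^-(2,q) \setminus \Om^-(2,q)$. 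If $\epsilon = -1$, then $x_1^2 = -I_4$, contradicting the second claim preceding the lemma that $u$ is not inverted by any element of $\gO^+(4,q)$ squaring to $-I$. If $\epsilon = +1$, then $x_2$ is an involution in $\SO^-(2,q)$, which is cyclic of order $q+1$, so its unique involution is $-I_2$; but $-I_2 \in \Om^-(2,q)$ (its $-1$-eigenspace is all of $V_u$, whose restricted form has square discriminant), contradicting $x_2 \notin \Om^-(2,q)$. The main obstacle is organizing the coset analysis so that both blocks are simultaneously pinned into non-$\Om$ cosets; once that is in place, the two sign cases are each cleanly eliminated by one of the two preceding inputs, namely the second claim about $u$ and the cyclic structure of $\SO^-(2,q)$.
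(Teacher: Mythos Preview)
Your proof is correct and follows essentially the same approach as the paper's. Both arguments establish reality via a block-diagonal inverter $(s_0, t)$ with each block in the non-$\Omega$ coset, then use the block decomposition (you via generalized eigenspaces, the paper via Wall's centralizer description, which amount to the same thing here) together with $C_{\gO^+(4,q)}(u) \subseteq \Omega^+(4,q)$ to pin any inverter in $\Omega^-(6,q)$ into $(\SO^+(4,q)\setminus\Omega^+(4,q)) \times (\SO^-(2,q)\setminus\Omega^-(2,q))$, and finally eliminate $x^2=-I$ via the pre-lemma computation on $u$ and $x^2=I$ via the cyclic structure of $\SO^-(2,q)$.
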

\begin{proof} First, take any element $x_0 \in \SO^-(2,q) \setminus \Om^-(2,q)$, and take the element $s_0 \in \SO^+(4,q) \setminus \Om^+(4,q)$ defined above.  Then we have $\left( \begin{array}{ll} s_0 & \\  & x_0 \end{array} \right) \in \Om^-(6,q)$, and this element conjugates $h$ to its inverse.  Thus $h$ is real in $\Omega^-(6,q)$ and $hZ$ is real in $\POm^-(6,q)$.

To prove that $hZ$ is not strongly real in $\POm^-(6,q)$, we first note that $h$ cannot be conjugate to $-h^{-1}$ since these two elements have different sets of elementary divisors.  So we consider only the possibility that $s h s^{-1} = h^{-1}$ with $s^2 = \pm I$.  Since $u \in \Om^+(4,q)$ and $-I \in \Om^-(2,q)$ have distinct eigenvalues, then it follows from \cite[Case (C), (iv) of 2.6]{Wall} that
$$ C_{\gO^-(6,q)}(h) = C_{\gO^+(4,q)}(u) \times C_{\gO^-(2,q)}(-I) = C_{\Om^+(4,q)} \times \gO^-(2,q),$$
since the centralizer of $u$ in $\gO^+(4,q)$ is completely contained in $\Om^+(4,q)$ (mentioned above).  That is, we have
$$ C_{\Om^-(6,q)}(h) = C_{\Om^+(4,q)}(u) \times \Om^-(2,q).$$
We now have the element 
$$(s_0, x_0) \in (\SO^+(4,q) \setminus \Om^+(4,q)) \times (\SO^-(2,q) \setminus \Om^-(2,q)),$$ which conjugates $h$ to its inverse, and every such element in $\Om^-(6,q)$ must be a product of this with an element of the centralizer of $h$ in $\Om^-(6,q)$.
So any element $s \in \Om^-(6,q)$ such that $shs^{-1} = h^{-1}$ and $s^2 = \pm I$ must be of the form $s = (\sigma, \tau)$ where $\sigma \in \SO^+(4,q) \setminus \Om^+(4,q)$, $\tau \in \SO^-(2,q) \setminus \Om^-(2,q)$, $\sigma^2 = \pm I$, and $\tau^2 = \pm I$.  As proven above, there is no element of $\gO^+(4,q)$ which conjugates $u$ to its inverse which squares to $-I$, and so we must assume $\sigma^2 = I$ and $\tau^2 = I$.  However, $\SO^-(2,q)$ is cyclic of order $q+1$, and so the only elements $\tau$ in $\SO^-(2,q)$ which square to $I$ are $\pm I$.  But $\pm I$ are both elements of $\Omega^-(2,q)$, contradicting that such an element would have to come from the other coset in $\SO^-(2,q)$.  Thus no such $s \in \Om^-(6,q)$ exists, and we have $hZ$ is weakly real in $\POm^-(6,q)$.
\end{proof}

We next consider the symmetric form on $\F_q^6$ defined by the Gram matrix
$$ \left( \begin{array}{ll} J_3 & \\ & J_3 \end{array} \right),$$
which is a non-split form when $q \equiv 3(\mmod 4)$, and we consider the element
$$ u_1 = \left( \begin{array}{rrrrrr} 1 & -1 & \gamma & & & \\ & 1 & 1 & & & \\ & & 1  & & & \\ & & & 1 & -1 & \gamma \\ & & & & 1 & 1 \\ & & & & & 1 \end{array} \right),$$
where $\gamma \in \F_q$ such that $2\gamma + 1 = 0$ (so if $q$ is a power of $p$, we can take $\gamma = (p-1)/2$).  Then $u_1$ preserves the non-split form above, has elementary divisors $(t-1)^3, (t-1)^3$, and since $u_1$ is unipotent then $u_1 \in \Om^-(6,q)$.  We need the following.

\begin{lemma} \label{interwr}
Any involution in $\SO^-(6,q)$ which conjugates the element $u_1$ to its inverse, must be an element of $\Om^-(6,q)$.
\end{lemma}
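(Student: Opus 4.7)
The strategy is to characterize the $(-1)$-eigenspace $V^-$ of any such involution $s$, and verify that the discriminant of the restricted form $B|_{V^-}$ is always a square, so that the spinor norm $\theta(s)$ is trivial and $s \in \Om^-(6,q)$. This suffices because, for an involution $s \in \SO^-(6,q)$, $\det s = 1$ forces $\dim V^-$ to be even, and the spinor norm of an involution equals, modulo squares, the discriminant of $B|_{V^-}$.

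First I would translate the inverting condition $s u_1 s^{-1} = u_1^{-1}$ into two structural conditions on the pair $V^{\pm}$: invariance of each under the $B$-symmetric part $(u_1 + u_1^{-1})/2$, together with the requirement that both $V^-$ and $V^+$ be totally isotropic for the skew form $\Phi(v,w) := B((u_1 - u_1^{-1})v, w)$. Using $2\gamma + 1 = 0$, one computes that $u_1 - u_1^{-1}$ is the block-diagonal sum of two copies of a simple $3 \times 3$ matrix, from which $\Phi$ has rank $4$ with radical $\mathrm{Rad}(\Phi) = \langle e_1, e_4 \rangle$.

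Next I would apply the dimension bound $2 \dim W - \dim(W \cap \mathrm{Rad}(\Phi)) \leq \dim V$ valid for any $\Phi$-isotropic subspace $W$, which in our situation forces $W \supseteq \langle e_1, e_4 \rangle$ whenever $\dim W = 4$. Applied to both $V^-$ and $V^+$, and using that at most one of them can contain $\langle e_1, e_4 \rangle$, this leaves exactly two cases: either $\dim V^- = 2$ with $V^+ \supseteq \langle e_1, e_4 \rangle$, or $\dim V^- = 4$ with $V^- \supseteq \langle e_1, e_4 \rangle$. In either case the $2$-dimensional factor lies inside $\langle e_1, e_4 \rangle^{\perp} = \langle e_1, e_2, e_4, e_5 \rangle$ as a complement of $\langle e_1, e_4 \rangle$ there, so has the form $\langle e_2 + \alpha_1 e_1 + \beta_1 e_4,\, e_5 + \alpha_2 e_1 + \beta_2 e_4 \rangle$ for some scalars; a direct calculation shows this basis is $B$-orthonormal, giving discriminant $1$. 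Since $V$ has discriminant $\det(J_3 \oplus J_3) = 1$, the complementary $4$-dimensional factor also has square discriminant, so $\mathrm{disc}(B|_{V^-})$ is a square in both cases, completing the proof.

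The main obstacle I expect is the second half of the translation step: it is easy to overlook that $V^+$ (and not just $V^-$) must also be $\Phi$-isotropic. Without this constraint one can exhibit a subspace $V^-$ of dimension $2$ meeting $\langle e_1, e_4 \rangle$ in a line whose $B$-Gram matrix has the form $\bigl(\begin{smallmatrix} 0 & 1 \\ 1 & \ast \end{smallmatrix}\bigr)$ with non-square discriminant $-1$ when $q \equiv 3(\mmod 4)$, producing an apparent inverting involution outside $\Om^-(6,q)$; the extra isotropy constraint on $V^+$ is precisely what rules out this case.
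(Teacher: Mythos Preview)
Your argument is correct and takes a genuinely different route from the paper's. The paper proceeds by brute force: it first parametrizes all orthogonal involutions conjugating $u_1$ to its inverse, uses the determinant condition to force the parameter $c=0$ and $a=b=\pm 1$, and then for each of the two resulting families writes down an explicit basis of the $(-1)$-eigenspace and computes the Gram matrix of $B$ restricted to it (obtaining $I_2$ in the $2$-dimensional case and a matrix of determinant $1$ in the $4$-dimensional case), invoking \cite[Proposition 16.30]{CaEn04} each time.

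Your approach replaces that matrix computation with a structural argument via the alternating form $\Phi(v,w)=B((u_1-u_1^{-1})v,w)$. The key observation---that $s$ anticommutes with $u_1-u_1^{-1}$ while $V^+\perp_B V^-$, so both eigenspaces are $\Phi$-isotropic---combined with the identification $\mathrm{Rad}(\Phi)=\langle e_1,e_4\rangle$ forces the $4$-dimensional eigenspace to contain $\langle e_1,e_4\rangle$, and hence the $2$-dimensional eigenspace to be a complement to $\langle e_1,e_4\rangle$ inside $\langle e_1,e_4\rangle^{\perp_B}=\langle e_1,e_2,e_4,e_5\rangle$, on which $B$ is visibly the standard form $I_2$. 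This is cleaner and more conceptual than the paper's calculation, and the mechanism (eigenspaces of an inverting involution are isotropic for the skew part of the isometry) would transport to other base cases; the paper's approach, by contrast, is entirely elementary and self-contained, at the cost of the ``somewhat tedious but straightforward calculation'' it acknowledges.

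Two minor remarks. First, the invariance of $V^{\pm}$ under $(u_1+u_1^{-1})/2$ that you list in the translation step is true but never used; you may drop it. Second, your dimension bound $2\dim W-\dim(W\cap\mathrm{Rad}(\Phi))\le\dim V$ is a consequence of the sharper (and perhaps more familiar) bound $\dim W-\dim(W\cap\mathrm{Rad}(\Phi))\le\tfrac{1}{2}\,\mathrm{rank}(\Phi)$; either version gives what you need here.
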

\begin{proof}  A somewhat tedious but straightforward calculation reveals that any orthogonal involution which conjugates $u_1$ to its inverse, using the above Gram matrix, must be of the form
$$ \left( \begin{array}{rrrrrr} a & a_1 & a_2 & c & c_1 & c_2 \\  & -a & a_1 &  & -c & c_1 \\  &  & a &  & & c \\ c & c_1 & c_2 & b & b_1 & b_2 \\  & -c & c_1  & &  -b & b_1 \\ & & c & &  &b \end{array} \right),$$
such that $a^2 = b^2$, $a^2 + c^2 = 1$,  and $c(a+b)=0$, among other required relations.  Thus either $c=0$ or $a=-b$.  The determinant of this matrix is $(ab-c^2)^3$, and so for this to be in $\SO^-(6,q)$ we must have $(ab-c^2)^3 = 1$.  If $c \neq 0$, then $a = -b$, which gives
$ (ab-c^2)^3 = (-a^2 - c^2)^3 = -1$.  Thus we must have $c = 0$ and $a^3 b^3 = 1$, and since then $a^2 = b^2 = 1$, we must have $a=b= \pm 1$.  It follows that an involution in $\SO^-(6,q)$ which conjugates $u_1$ to its inverse must be either of the form
$$ \left( \begin{array}{rrrrrr} 1 & a_1 & a_2 &  & c_1 & c_2 \\  & -1 & a_1 &  &  & c_1 \\  &  & 1 &  & &  \\  & c_1 & c_2 & 1 & b_1 & b_2 \\  &  & c_1  & &  -1 & b_1 \\ & &  & &  & 1 \end{array} \right),$$
such that $a_1^2 + c_1^2 + 2a_2 = b_1^2 + c_1^2 + 2b_2 = c_1(a_1 + b_1) + 2c_2 = 0$, or of the form
$$  \left( \begin{array}{rrrrrr} -1 & a_1 & a_2 &  & c_1 & c_2 \\  & 1 & a_1 &  &  & c_1 \\  &  & -1 &  & &  \\  & c_1 & c_2 & -1 & b_1 & b_2 \\  &  & c_1  & &  1 & b_1 \\ & &  & &  & -1 \end{array} \right),$$
such that $a_1^2 + c_1^2 - 2a_2 = b_1^2 + c_1^2 - 2b_2 = c_1(a_1 + b_1) - 2c_2 = 0$.

In order to determine whether these involutions are elements of $\Omega^-(6,q)$, we must calculate whether the restriction of the symmetric form to the $-1$-eigenspace of the involution is split or non-split.  In the first case above, the $-1$-eigenspace is 2-dimensional and spanned by the vectors 
$$(-a_1/2, 1, 0, -c_1/2, 0, 0)^{\top}, (-c_1/2, 0, 0, -b_1/2, 1, 0)^{\top}.$$
In terms of this ordered eigenbasis, the matrix for the symmetric form restricted to this eigenspace is $I_2$, which is a non-split form by \cite[pg. 222]{CaEn04}, and so this involution is an element of $\Om^-(6,q)$ by \cite[Proposition 16.30]{CaEn04}.  In the second case above, the $-1$-eigenspace of the involution is spanned by the vectors $(1, 0, 0, 0, 0, 0)^{\top}$, $(0, -a_1/2, 1, 0, -c_1/2, 0)^{\top}$, $(0, 0, 0, 1, 0, 0)^{\top}$, and $(0, -c_1/2, 0, 0, -b_1/2, 1)^{\top}$.  When restricting our symmetric form to this eigenspace, then with respect to this ordered basis, the restricted symmetric form has Gram matrix
$$ \left( \begin{array}{cccc} 0 & 1 & 0 & 0 \\ 1 & (a_1^2 + c_1^2)/4 & 0 & c_1 (a_1+b_1)/4 \\ 0 & 0 & 0 & 1 \\  0 & c_1 (a_1+b_1)/4 & 1 & (b_1^2 + c_1^2)/4 \end{array} \right),$$
which has determinant 1.  It follows this is a split form by \cite[pg. 222]{CaEn04}, and the involution is again an element of $\Om^-(6,q)$ by \cite[Proposition 16.30]{CaEn04}, giving the result.
\end{proof}

\noindent {\bf Remark. } The centralizer of $u_1$ in $\SO^-(6,q)$ contains elements both in $\Om^-(6,q)$ and its other coset, and so while there are elements outside of $\Om^-(6,q)$ which conjugate $u_1$ to its inverse, there are no such involutions.\\
\\
Finally, we consider the element $ h_0 = \left( \begin{array}{rr} u_1 & \\  & -u \end{array} \right)$, where $u$ is the element of $\Om^+(4,q)$ described above.  Then $h_0$ is an element of $\Om^-(10,q)$ with respect to the symmetric form defined by the Gram matrix
$$ \left( \begin{array}{ccc} J_3 &  & \\  & J_3 & \\ & & J_4 \end{array} \right),$$
and $h_0$ has elementary divisors $(t-1)^3, (t-1)^3, (t+1)^2, (t+1)^2$.

\begin{lemma} \label{Om10wr}
The element $h_0 \in \Om^-(10,q)$ is such that $h_0 Z$ is weakly real in $\POm^-(10,q)$ when $q \equiv 3(\mmod 4)$.
\end{lemma}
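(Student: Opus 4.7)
The plan is to mirror the argument of Lemma \ref{Om6wr} almost verbatim, replacing the analysis of the $-I_2$ block in $\Om^-(2,q)$ with the analysis of $u_1$ in $\Om^-(6,q)$ provided by Lemma \ref{interwr} and the remark following it. I will proceed in three steps.

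First I would verify that $h_0$ is real in $\Om^-(10,q)$, so that $h_0 Z$ is real in $\POm^-(10,q)$. Since the spectra of $u_1$ and $-u$ are disjoint, Wall's description \cite[Case (C), (iv) of 2.6]{Wall} gives $C_{\gO^-(10,q)}(h_0) = C_{\gO^-(6,q)}(u_1) \times C_{\gO^+(4,q)}(u)$, and the second factor is contained in $\Om^+(4,q)$ by the centralizer fact used in the proof of Lemma \ref{Om6wr}. To produce an inverting element in $\Om^-(10,q)$, I take $\tau = s_0 \in \SO^+(4,q) \setminus \Om^+(4,q)$ from the setup preceding Lemma \ref{Om6wr}, which inverts $u$ and therefore $-u$, and use the remark after Lemma \ref{interwr}, which says that $C_{\SO^-(6,q)}(u_1)$ meets both cosets of $\Om^-(6,q)$, to pick an inverting element $\sigma$ for $u_1$ lying in $\SO^-(6,q) \setminus \Om^-(6,q)$. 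The determinant and spinor-norm contributions of $\sigma$ and $\tau$ then cancel, placing $(\sigma, \tau) \in \Om^-(10,q)$.

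Next I would observe that $h_0$ and $-h_0^{-1}$ have different elementary-divisor multisets, so strong reality of $h_0 Z$ in $\POm^-(10,q)$ reduces to the existence of $s \in \Om^-(10,q)$ with $s h_0 s^{-1} = h_0^{-1}$ and $s^2 = \pm I$. The centralizer decomposition above forces any such $s$ to split as $s = (\sigma, \tau)$ with $\sigma$ inverting $u_1$ in $\gO^-(6,q)$ and $\tau$ inverting $u$ in $\gO^+(4,q)$; moreover, since $C_{\gO^+(4,q)}(u) \subset \Om^+(4,q)$ and $s_0$ is one such inverting element, every such $\tau$ lies in the single coset $s_0\, \Om^+(4,q) \subset \SO^+(4,q) \setminus \Om^+(4,q)$.

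The remaining case analysis is then essentially forced, and constitutes the main but routine obstacle. If $s^2 = -I$, then $\tau^2 = -I$, which is ruled out by the direct calculation already carried out in the proof of Lemma \ref{Om6wr}. If $s^2 = I$, then both $\sigma$ and $\tau$ are honest involutions; from $\tau \in \SO^+(4,q) \setminus \Om^+(4,q)$ I read off $\det(\tau) = 1$ and $\theta(\tau) \neq 0$, so the requirement $(\sigma,\tau) \in \Om^-(10,q)$ forces $\det(\sigma) = 1$ and $\theta(\sigma) = \theta(\tau) \neq 0$, that is, $\sigma \in \SO^-(6,q) \setminus \Om^-(6,q)$. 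But Lemma \ref{interwr} asserts that every involution in $\SO^-(6,q)$ inverting $u_1$ lies in $\Om^-(6,q)$, giving the desired contradiction. The main source of friction throughout is bookkeeping of the determinant and spinor-norm cosets on both factors simultaneously; once Lemma \ref{interwr}, its remark, and the $u$-analysis from Lemma \ref{Om6wr} are in hand, the argument is otherwise mechanical.
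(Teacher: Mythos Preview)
Your proposal is correct and follows essentially the same route as the paper's proof: establish reality via an inverting element $(\sigma,\tau)$ with $\sigma \in \SO^-(6,q)\setminus\Om^-(6,q)$ and $\tau = s_0 \in \SO^+(4,q)\setminus\Om^+(4,q)$, use Wall's centralizer description and the fact that $C_{\gO^+(4,q)}(-u)\subset\Om^+(4,q)$ to pin any $\Om^-(10,q)$-inverter of $h_0$ to that same pair of cosets, and then eliminate $s^2=-I$ via the $u$-calculation and $s^2=I$ via Lemma~\ref{interwr}. One small correction of attribution: the fact that no element of $\gO^+(4,q)$ inverting $u$ squares to $-I$, and the fact that $C_{\gO^+(4,q)}(u)\subset\Om^+(4,q)$, are established in the setup preceding Lemma~\ref{Om6wr} rather than in its proof.
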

\begin{proof}  It was shown above that the element $u$ is conjugated to its inverse by elements in $\SO^+(4,q)$ which are not elements of $\Om^+(4,q)$, and that $u$ is not conjugated to its inverse by any element in $\gO^+(4,q)$ which squares to $-I$.  Thus both of these statements hold for $-u$ as well.

As mentioned above, the centralizer of $u_1$ contains elements in $\SO^-(6,q)$ which are not in $\Om^-(6,q)$, which follows from \cite[Proposition 16.34]{CaEn04}.  Taking any element of $\Om^-(6,q)$ which conjugates $u_1$ to its inverse (for example, any of the involutions described above), and multiplying it by an element of the centralizer of $u_1$ from $\SO^-(6,q) \setminus \Om^-(6,q)$, yields an element $y_1$ from that outer coset which conjugates $u_1$ to its inverse.  We have already found an involution $s_0$ from $\SO^+(4,q) \setminus \Om^+(4,q)$ which conjugates $-u$ to its inverse, and so now the block diagonal element $(y_1, s_0) \in \Om^-(10,q)$ conjugates $h_0 = (u_1, -u)$ to its inverse.  Thus $h_0 Z$ is real in $\POm^-(10,q)$.  Next we show that $h_0 Z$ is not strongly real in $\POm^-(10,q)$, and note that we cannot have $h_0$ conjugate to $-h_0^{-1}$ since these elements have different sets of elementary divisors.  So we must show there is no $s \in \Om^-(10,q)$ such that $s h_0 s^{-1} = h^{-1}$ and $s^2 = \pm I$.

Since $u_1$ and $-u$ have distinct eigenvalues, and since the centralizer of $-u$ in $\gO^+(4,q)$ is completely contained in $\Om^+(4,q)$ by \cite[Proposition 16.34]{CaEn04}, then it follows from \cite[Case (C), (iv) of 2.6]{Wall} that we have
$$ C_{\Om^-(10,q)}(h_0) = C_{\Om^-(6,q)}(u_1) \times C_{\Om^+(4,q)}(-u).$$
Since there is an element  $$(y_1, s_0) \in (\SO^-(6,q) \setminus \Om^-(6,q)) \times (\SO^+(4,q) \setminus \Om^+(4,q)),$$ such that $y_1$ conjugates $u_1$ to its inverse and $s_0$ conjugates $-u$ to its inverse, then from the structure of the centralizer of $h_0$, any element of $\Om^-(10,q)$ which conjugates $h_0$ to its inverse must take this form.  However, there is no element in $\SO^+(4,q) \setminus \Om^+(4,q)$ which conjugates $-u$ to its inverse and which squares to $-I$, so there is no such conjugating element for $h_0$ in $\Om^-(10,q)$.  Since there is no involution in $\SO^-(6,q) \setminus \Om^-(6,q)$ which conjugates $u_1$ to its inverse by Lemma \ref{interwr}, then there can be no involution in $\Om^-(10,q)$ which conjugates $h_0$ to its inverse.  Thus $h_0 Z$ is not strongly real in $\POm^-(10,q)$.
\end{proof}

\end{document}